\documentclass[11pt]{amsart}

\usepackage{latexsym,amssymb,amsfonts,amsmath,mathrsfs,bm}
\addtolength{\textwidth}{3 truecm}
%\addtolength{\textheight}{1 truecm}
%\setlength{\voffset}{-0.6 truecm}
\setlength{\hoffset}{-1.3 truecm}

\newcommand{\sumstar}{\sideset{}{^{*}}\sum}
\newcommand{\sumflat}{\sideset{}{^{\flat}}\sum}
\newcommand{\sumsharp}{\sideset{}{^{\sharp}}\sum}
\newcommand{\chibar}{\overline{\chi}}
\renewcommand{\AA}{{\mathcal{A}}}
\newcommand{\BB}{{\mathcal{B}}}
\renewcommand{\SS}{{\mathcal{S}}}

\newcommand{\R}{{\mathbb R}}

\newcommand{\Z}{{\mathbb Z}}
\newcommand{\eps}{{\varepsilon}}

\newtheorem{theorem}{Theorem}[section]

\newtheorem*{theorem25}{Theorem 2.5}
\newtheorem{proposition}[theorem]{Proposition}
\newtheorem{corollary}[theorem]{Corollary}

\newtheorem{lemma}[theorem]{Lemma}

\theoremstyle{definition}

\theoremstyle{remark}

\newtheorem*{remark1}{Remark}
\newtheorem*{convention}{Convention}
\newtheorem*{note}{Note}

\numberwithin{equation}{section}

\begin{document}

\title[Asymptotic Large Sieve]{Asymptotic Large Sieve}
\author{J.B. Conrey, H. Iwaniec and K. Soundararajan}
\begin{address}{American Institute of Mathematics and University of Bristol}
\end{address}
\begin{address}{Rutgers University}
\end{address}
\begin{address}{Stanford University}
\end{address}

%\address{}
%\email{}
%\thanks{}
%\subjclass[2000]{}
%\date{}
%\keywords{}

\begin{abstract}
Motivated by applications to the study of L-functions, we develop an asymptotic version of the large sieve inequality for 
linear forms in primitive Dirichlet characters. 
\end{abstract}

\maketitle

\section{Introduction}\label{section1}

Before stating our goal and the main results we shall discuss several points which highlight the essence of the
Asymptotic Large Sieve versus the classical Large Sieve Inequality.

Given a sequence of complex numbers $\AA = (a_{n})$ of some significance in analytic number theory we often need to know if
and how much does there exist a cancellation in the sum
$$\sum_{n \leq N}a_{n}$$
which is due to the variation in the arguments of its terms?  For example, if $a_{n} = \mu(n)$ is the M\"{o}bius function
we expect that
$$\sum_{n \leq N}\mu(n) \ll N^{1/2 + \eps}$$
which bound is equivalent to the Riemann Hypothesis for
$$\zeta(s) = \sum_{n}n^{-s} = \prod_{p}(1 - p^{-s})^{-1}.$$
In other words the RH implies that the series
$$\zeta(s)^{-1} = \sum_{n}\mu(n)n^{-s}$$
converges and does not vanish in the complex half plane $\Re{s} > 1/2$.  Today we know weaker estimates, nevertheless very useful
ones, such as
\begin{equation}\label{11}
\sum_{n \leq X}\mu(n) \ll X\exp{(-\sqrt{\log{X}})}.
\end{equation}

It is a popular view that the M\"{o}bius function changes sign quite randomly.  Indeed we expect the twisted sum
$$\sum_{n \leq X}\mu(n)a_{n}$$
with quite general coefficients retains a considerable cancellation, of course with obvious exceptions such like twisting
against itself or cutting the support of $\mu(n)$ in a combinatorial fashion as in the sieve theory.

Loosely speaking the Riemann Hypothesis for $L$-functions ensures cancellation in relevant sums which is in the order of
magnitude equal to the square-root of the number of summation terms.  In analytic number theory this point of view on the RH
is most enlightening because it offers a guideline on to how much cancellation one can rely on when handling sums of
randomly chosen sequences.  The rule of thumb tells us it should be the square-root of the number of terms, but definitely not more.
Indeed the $L$-functions do have zeros on the critical line $\Re{s}=1/2$ which set the limit in question.
How many critical zeros are there is the question which has drawn us in the first place to the development of the ALS in this
paper (see \cite{CIS}).

Although in practice we investigate a specific sequence $\AA = (a_{n})$, its structure is often so complicated that it leaves
us no other option than to regard $a_{n}$ as any given numbers.  How in such a scenario could anyone hope to produce a
cancellation?  Obviously this is not possible for the individual sequence $\AA$.  But when processing a problem at hand in
analytic number theory one applies to $\AA$ a family of suitable ``harmonics'', say $\chi$'s, and the issue of
cancellation extends to the family of twisted sequences $\AA_{\chi} = (a_{n}\chi(n))$.  Think of the Dirichlet idea of
twisting prime numbers by multiplicative characters $\chi \pmod{q}$ in order to capture primes in an arithmetic progression
$p \equiv a \pmod{q}$ with $(a,q)=1$.

In this paper we choose the family of primitive characters not only of a fixed conductor $q$, but more effectively we also
let $q$ vary over a large segment.  Since $\AA = (a_{n})$ is fixed, it cannot be biased to many characters $\chi(n)$ because
the vectors $[\ldots,\chi(n),\ldots]$ are almost orthogonal.  Therefore the twisted sums
\begin{equation}\label{12}
\sum_{n \leq N}a_{n}\chi(n)
\end{equation}
enjoy a considerable cancellation for almost all characters.  Let us call $\chi$ a ``singular'' character for
$\AA = (a_{n})$ if the twisted sum \eqref{12} does not show a significant cancellation.  In real life the number of
singular characters with respect to a given sequence $\AA$ is fixed by the degree of the corresponding zeta function
$$\zeta_{\AA}(s) = \sum_{n}a_{n}n^{-s}.$$
For example, if $\zeta_{\AA}(s) = \zeta(s)^{g}$ then the principal character is the only singular one.  If
$\zeta_{\AA}(s) = \zeta_{K}(s)$ is the zeta function of an abelian number field $K$ of degree $g$, then $\AA$ admits
exactly $g$ singular characters.  If $\zeta_{\AA}(s) = L(s,f)$ or $L(s,\text{sym}^{2}f)$ where $f$ is a Hecke cusp form
on $SL_{2}(\Z)$, then no character is singular.  These series have Euler products of degree $g=2$ or $g=3$ respectively.
Finally if $\zeta_{\AA}(s) = \zeta(s)^{-1}$ then no character is singular.

Using the orthogonality formula
\begin{equation}\label{13}
\frac{1}{\varphi(q)}\sum_{\chi\, (\!\bmod{q})}\chi(m)\chibar(n) = \left\{\begin{array}{rl}1 & \text{if}\,\, m=n \pmod{q}\\
0 & \text{otherwise}\end{array}\right.
\end{equation}
which is valid for $(mn,q)=1$, one quickly shows that for any complex numbers $a_{n}$
$$\sum_{\chi \,(\!\bmod{q})}\Big|\sum_{n \leq N}a_{n}\chi(n)\Big|^{2} \leq (q+N)\sum_{n\leq N}|a_{n}|^{2}.$$
This easy result is quite interesting, it exhibits the law of a square-root of cancellation as soon as the family of
characters is large enough.  Indeed, by Cauchy's inequality applied to every single sum one gets
$$\Big|\sum_{n \leq N}a_{n}\chi(n)\Big|^{2} \leq N\sum_{n \leq N}|a_{n}|^{2}$$
whereas the averaging over $\chi \pmod{q}$ shows that the whole factor $N$ in the above trivial estimation can be saved, provided
$q \gg N$.

The classical Large Sieve Inequality for primitive characters reveals a much stronger property of orthogonality, it asserts that
\begin{equation}\label{14}
\sum_{q \leq Q}\frac{q}{\varphi(q)}\sumstar_{\chi\, (\!\bmod{q})}\Big|\sum_{n \leq N}a_{n}\chi(n)\Big|^{2} \leq
(Q^{2}+N)\sum_{n \leq N}|a_{n}|^{2}.
\end{equation}
The superscript $*$ indicates that the summation runs over primitive characters.  Without this restriction, the inequality
\eqref{14} would be obviously false.  Here the saving of factor $N$ occurs much sooner, that is if $Q \gg \sqrt{N}$.
Yes, to achieve this effect one still needs a family of respectful size, however of a lot smaller conductor and this is
the key attraction of the LSI.  The power of the LSI is so huge that it is capable to produce results which can compete with
those obtainable by the Generalised Riemann Hypothesis for the Dirichlet $L$-functions.  Actually the LSI is more versatile
a tool than the GRH, the point being that it works for general sequences $\AA = (a_{n})$ while the latter only for
coefficients of $L$-functions, that is no additional structure of $\AA = (a_{n})$ in \eqref{14} is required.  Moreover,
\eqref{14} also holds for sums restricted to any interval $M < n \leq M+N$ of length $N \geq 1$, regardless where it is
located
\begin{equation}\label{15}
\sum_{q \leq Q}\frac{q}{\varphi(q)}\sumstar_{\chi\, (\!\bmod{q})}\Big|\sum_{M < n \leq M+N}a_{n}\chi(n)\Big|^{2} \leq
(Q^{2}+N)\sum_{M < n \leq M+N}|a_{n}|^{2}
\end{equation}
(cf. Theorem 7.13 of \cite{IK}).  We have also the Hybrid Large Sieve Inequality due to P. X. Gallagher
(cf. Theorem 7.17 of \cite{IK}),
\begin{equation}\label{16}
\sum_{q \leq Q}\,\,\sumstar_{\chi (\!\bmod{q})}\int_{-T}^{T}\Big|\sum_{n \leq N}a_{n}\chi(n)n^{it}\Big|^{2}dt \leq
(Q^{2}T+N)\sum_{n \leq N}|a_{n}|^{2}.
\end{equation}
As a matter of fact \eqref{16} follows from \eqref{15} by smoothing in $t$, squaring out,  and splitting the resulting sum
of $ a_{m}\overline{a_{n}}\chi(m)\chibar(n)(m/n)^{it}$ into short segments.

A slightly more ready to use estimate can be derived from \eqref{14} for bilinear forms of type
\begin{equation}\label{17}
\sum_{q \leq Q}\,\,\sumstar_{\chi\, (\!\bmod{q})}\sum_{m}\sum_{n}a_{m}b_{n}F(m,n)\chi(m)\chibar(n)
\end{equation}
where $\AA = (a_{n})$, $\BB = (b_{n})$ are any sequences of complex numbers and $F(x,y)$ is a nice smooth function supported
in the box $[1,M] \times [1,N]$.  For example, suppose the Mellin transform of $F(x,y)$ satisfies
\begin{equation}\label{18}
(2\pi)^{-2}\iint|\widehat{F}(iu,iv)|dudv \leq \mathcal{L}(M,N).
\end{equation}
Then separating the variables $m,n$ in \eqref{17} by Mellin's inversion and applying Cauchy's inequality one shows
that \eqref{17} is bounded by
\begin{equation}\label{19}
\mathcal{L}(M,N)(Q^{2}+M)^{1/2}(Q^{2}+N)^{1/2}\Big(\sum_{n \leq M}|a_{n}|^{2}\Big)^{1/2}\Big(
\sum_{n \leq N}|b_{n}|^{2}\Big)^{1/2}.
\end{equation}
The separation of variables is not expensive since \eqref{18} is often quite small
$$\mathcal{L}(M,N) \ll (\log{2M})(\log{2N}).$$

The terminology ``Large Sieve'' has been used in the literature for seventy years, originating from Yu. V. Linnik's work \cite{Lin}.
Subsequently the method has been modified and generalized to the extent that the trace of sieve genes is no longer
recognizable.  Our goal is to develop the ``Asymptotic Large Sieve'' which gives a more precise asymptotic formula for
special bilinear forms in place of general upper bounds like \eqref{19}.  At this point we feel it is appropriate to call any
result of type \eqref{14} a ``Large Sieve Inequality'' and we are going to explain how this extra word ``Inequality'' addresses
the essence of the results.

Yes, the upper bound \eqref{14} is almost best possible, but it is not perfect; for one reason the number of characters in
the family is smaller than $Q^{2}$ by a constant factor.  Some applications are sensitive to constant factors, so losing the
true cardinality of the family is not acceptable.  On the other hand there is no chance to turn the LSI into an asymptotic
formula by refining standard arguments.  Every known approach makes an appeal in one way or another to the duality
principle for bilinear forms, hence no chance to maintain asymptotic values.  This seemingly little sacrifice, nevertheless makes
the duality arguments so powerful.

To get a better understanding of what is at stake, let us examine the Large Sieve Inequality with additive characters
\begin{equation}\label{110}
\sum_{q \leq Q}\,\,\sumstar_{a\, (\!\bmod{q})}\Big|\sum_{n \leq N}a_{n}e\Big(\frac{an}{q}\Big)\Big|^{2} \leq
(Q^{2}+N)\sum_{n \leq N}|a_{n}|^{2}.
\end{equation}
By the way \eqref{14} follows from \eqref{110} by using Gauss sums.  Here the dual form of \eqref{110} asserts that
\begin{equation}\label{111}
\sum_{n\leq N}\Big|\sum_{q \leq Q}\,\,\sumstar_{a\, (\!\bmod{q})}\gamma_{a/q}e\Big(\frac{an}{q}\Big)\Big|^{2} \leq
(Q^{2}+N)\sum_{q \leq Q}\,\,\sumstar_{a\,(\!\bmod{q})}|\gamma_{a/q}|^{2}.
\end{equation}
for any complex numbers $\gamma_{a/q}$.  Be aware that this equivalence requires testing \eqref{110} for all vectors
$[\ldots,a_{n},\ldots]$ versus \eqref{111} for all vectors $[\ldots,\gamma_{a/q},\ldots]$, not just a few chosen ones.
Therefore, during the passage through duality one has no access to special features of the sequence $\AA = (a_{n})$ which
might be usable, but are discarded.

Next, due to positivity, the dual from \eqref{111} can be smoothed in $n$ before squaring out and changing the order of
summation.  The benefit of such procedure is that one can execute the summation over integers $n$ quite precisely, because all integers
in an interval are evenly spaced.  If one treats \eqref{110} directly then a problem occurs with the distribution of the rational
points $a/q$ modulo one.  These points are well-spaced (the consecutive gaps are $\geq Q^{-2}$), but not evenly.  Therefore
some levelling and smoothing is necessary, which operations create the source where the cardinality factor is lost.
Had one decided to control the gaps between the points $a/q$, then the most adequate tool for the job would have been the spectral
analysis on the modular surface, which is in a different league than that on $\R$.  Fortunately a little use of the
M\"{o}bius function does the job adequately.

As explained above we have to quit the robust duality ideas and start treating the bilinear form \eqref{17} directly.
Therefore, from the very beginning the positivity aspect does not rule the show.  The success comes at the price that the
coefficients $a_{n},b_{n}$ are no longer arbitrary.  Our conditions will be imposed gradually to keep the intermediate
results as general as we can.

The results of this paper are used in [CIS] to obtain lower bounds for the proportion of simple zeros on the critical line of
families of twists of $GL_1$, $GL_2$ and $GL_3$ L-functions.

\vspace{.1in}
ACKNOWLEDGEMENTS. These works were begun at AIM in 1998 and continued over the years at AIM, Rutgers, IAS, Stanford, Bristol, and MSRI. We gratefully acknowledge the support of all of these institutions. This work was also supported in part by grants from the National Science Foundation.

\section{General Results}\label{section2}

Our objective is to evaluate asymptotically the bilinear form
\begin{equation}\label{21}
\SS(\AA \times \BB) = \sum_{q}\frac{\Psi(q/Q)}{\varphi(q)}\sumstar_{\chi \,(\!\bmod{q})}\sum_{m}\sum_{n}a_{m}b_{n}
F(m,n)\chi(m)\chibar(n)
\end{equation}
for two sequences $\AA = (a_{m})$, $\BB = (b_{n})$ of complex numbers which will be specialized in due course.  Here the conductor
$q$ is restricted by a smooth function $\Psi$ of compact support in $\R^{+}$, so $q\asymp Q$, and $Q$ is a large parameter.
Such a smooth cut-off makes no difference in applications, yet it helps in technical arguments.

The function $F(x,y)$ is assumed to be smooth and supported in a square box
\begin{equation}\label{22}
1 \leq x,y \leq N, \qquad N \geq 2.
\end{equation}
We assume that
\begin{equation}\label{23}
x^{i}y^{j}|F^{(i,j)}(x,y)| \leq 1, \qquad \text{if}\,\, 0\leq i,j \leq 2.
\end{equation}
Sometimes we need the variables $m,n$ to run independently, so they need to be separated in $F(m,n)$.  To this end one
can use the Mellin inversion technique, that is we write
$$F(x,y) = \oint\!\!\oint \widehat{F}(u,v)x^{-u}y^{-v}dudv$$
with
$$\widehat{F}(u,v) = \iint F(x,y)x^{u-1}y^{v-1}dxdy.$$
We may only need the Mellin transform $\widehat{F}$ on the purely imaginary lines.  Then \eqref{23} yields (by partial
integration)
$$|\widehat{F}(iu,iv)| \leq \frac{(2\log{N})^{2}}{(1+u^{2})(1+v^{2})}$$
for $u,v$ real, hence
$$\iint|\widehat{F}(iu,iv)|dudv \leq (2\pi\log{N})^{2}.$$
We shall make some modifications of $F(x,y)$ in Section \ref{section9}.

Next we put some growth conditions for the coefficients.  We assume
\begin{equation}\label{24}
|a_{m}| \leq m^{-1/2}\tau(m)^{A}, \qquad |b_{n}| \leq n^{-1/2}\tau(n)^{A}.
\end{equation}
Note that these conditions do not limit very much the applicability of results.  If one has apparently weaker conditions
$a_{m} \ll m^{\eps - 1/2}$, $b_{n} \ll n^{\eps - 1/2}$, then a re-normalization by factor $N^{\eps}$ brings them down
to \eqref{24} while the error term in the obtained asymptotic formula gets worse by the factor $N^{\eps}$.

In applications our coefficients appear as the convolution
\begin{equation}\label{25}
a_{m} = \frac{1}{\sqrt{m}}\sum_{lr=m}\lambda(l)\rho(r)
\end{equation}
with $\lambda(l)$ being the coefficients of an $L$-function
\begin{equation}\label{26}
\mathcal{L}(s) = \sum_{l}\lambda(l)l^{-s}
\end{equation}
and $\rho(r)$ being the coefficients of $\mathcal{L}(s)^{-1}$ modified by  smooth weights  supported on
\begin{equation}\label{27}
1 \leq r \leq X.
\end{equation}
In this example
$$\zeta_{\AA}(s) = \sum_{m}a_{m}m^{-s} = \mathcal{L}(s + \frac{1}{2})\mathcal{M}(s + \frac{1}{2})$$
where
$$\mathcal{M}(s) = \sum_{1 \leq r \leq X}\rho(r)r^{-s}$$
is a Dirichlet polynomial called a ``mollifier''.  We shall succeed with $\mathcal{L}(s)$ having the Euler product of
degree $g \leq 3$ (barely missing degree four).  Therefore $|\lambda(l)| \leq \tau_{g}(l)$, $|\rho(r)| \leq \tau_{g}(r)$,
$|a_{m}| \leq m^{-1/2}\tau(m)\tau_{g}(m) \leq m^{-1/2}\tau(m)^{g}$, hence \eqref{24} holds with $A = g$.

There is plenty of room for modifications in the above setting.  Let us illustrate some of these.  At some point we shall need
results for coefficients \eqref{25} where the generating function \eqref{26} is slightly shifted to $\mathcal{L}(s + \alpha)$
with a small complex number $\alpha$,
\begin{equation}\label{28}
|\alpha| \ll (\log{Q})^{-1}.
\end{equation}
This can easily be covered by the case $\alpha = 0$.  To this end, multiply \eqref{25} throughout by $m^{-\alpha}$ and use
$r^{-\alpha}\rho(r)$, $m^{\alpha}F(m,n)$ in place of $\rho(r)$, $F(m,n)$.  The same can be done for the coefficients $b_{n}$.

However, we are going to work for a while with free coefficients $a_{m},b_{n}$ satisfying only \eqref{24} until the structure
\eqref{25} becomes necessary.  In this somewhat general presentation we obtain prefabricated products some of which can be taken
for further developments in the future.

For all positive integers $m,n$ we put
\begin{equation}\label{29}
\Delta(m,n) = \sum_{q}\frac{\Psi(q/Q)}{\varphi(q)}\sumstar_{\chi \,(\!\bmod{q})}\chi(m)\chibar(n).
\end{equation}
This is an averaging operator over our family of harmonics (which are the primitive characters of conductor of size $Q$).
There is nothing special with our choice of arithmetic weight $1/ \varphi(q)$; other natural choices would be
$1/ \varphi^{*}(q)$ where $\varphi^{*}(q)$ denotes the number of primitive characters modulo $q$ ($\varphi^{*} = \mu \ast \varphi$).
This is less convenient, because $\varphi^{*}(q) = 0$ if $2 \| q$.

The bilinear form \eqref{21} becomes
\begin{equation}\label{210}
\SS(\AA \times \BB) = \sum_{m}\sum_{n}a_{m}b_{n}F(m,n)\Delta(m,n).
\end{equation}
A substantial part of our work on $\SS(\AA \times \BB)$ concerns $\Delta(m,n)$ acting on individual $m,n$.  Along the lines
one can find various decompositions of $\Delta(m,n)$ whose constituents emerge from meaningful sources.  Some of these are
matured and simple, but unfortunately we have to state some important ones in a crude form before giving final estimations.
In particular the exact expression \eqref{60} is hard to grasp, but we do not want to settle with just estimates which would close the
door for future exploration.

The hardest job is to excavate the leading terms.  In order to find it a bit easier we shall accept conditions on $\AA, \BB$ such
that only the contribution of diagonal terms
\begin{equation}\label{211}
\SS_{diag}(\AA \times \BB) = \sum_{m}a_{m}b_{m}F(m,m)\Delta(m,m)
\end{equation}
makes the leading term.  Here we have
\begin{equation}\label{212}
\Delta(m,m) = \delta(m)\Delta(1,1) + O(\tau(m)Q^{1/2})
\end{equation}
with
\begin{equation}\label{213}
\delta(m) = \prod_{p \mid m}\Big(1 - \frac{1}{p}\Big)\Big(1- \frac{1}{p^{2}}-\frac{1}{p^{3}}\Big)^{-1}.
\end{equation}
For
\begin{equation}\label{214}
\Delta(1,1) = \sum_{q}\Psi\Big(\frac{q}{Q}\Big)\frac{\varphi^{*}(q)}{\varphi(q)}
\end{equation}
(the cardinality measure of our family of characters) we get
\begin{equation}\label{215}
\Delta(1,1) = \Psi\mathfrak{S}Q + O(Q^{1/2})
\end{equation}
with
\begin{equation}\label{216}
\Psi = \int\Psi(x)dx,
\end{equation}
\begin{equation}\label{217}
\mathfrak{S} = \prod_{p}\Big(1- \frac{1}{p^{2}}-\frac{1}{p^{3}}\Big).
\end{equation}
Hence
\begin{proposition}
For $a_{m},b_{n}$ and $F(m,n)$ satisfying \eqref{24},\eqref{23}, we have
\begin{equation}\label{218}
\SS_{diag}(\AA \times \BB) = \Psi\mathfrak{S}Q\sum_{m}a_{m}b_{m}\delta(m)F(m,m) + O(Q^{1/2}(\log{N})^{A}).
\end{equation}
\end{proposition}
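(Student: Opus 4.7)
The plan is essentially to substitute the asymptotic formulas \eqref{212} and \eqref{215}, which have already been stated, into the definition \eqref{211} of $\SS_{diag}(\AA\times\BB)$, and then estimate the two resulting error contributions using the size conditions \eqref{24} on the coefficients together with the support and sup-norm bound on $F$.

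First I would plug \eqref{212} directly into \eqref{211} to split
\begin{equation*}
\SS_{diag}(\AA\times\BB) = \Delta(1,1)\sum_{m} a_{m}b_{m}\delta(m)F(m,m) + O\!\left(Q^{1/2}\sum_{m}|a_{m}b_{m}F(m,m)|\tau(m)\right),
\end{equation*}
and then insert \eqref{215} in the main term to extract the factor $\Psi\mathfrak{S}Q$, picking up an additional error
\begin{equation*}
O\!\left(Q^{1/2}\sum_{m}|a_{m}b_{m}\delta(m)F(m,m)|\right).
\end{equation*}
Both errors have the shape $Q^{1/2}$ times a weighted $\ell^{1}$-sum of the diagonal coefficients.

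The second step is to control these two $\ell^{1}$-sums. Using $|F(m,m)|\le 1$ from the $i=j=0$ case of \eqref{23}, the support condition restricting $m\le N$, and the trivial bound $\delta(m)\le 1$ from \eqref{213}, the coefficient bound \eqref{24} gives
\begin{equation*}
|a_{m}b_{m}F(m,m)|\tau(m)\ \le\ \frac{\tau(m)^{2A+1}}{m}\qquad\text{for}\ m\le N,
\end{equation*}
and similarly without the $\tau(m)$ factor in the other sum. Both are handled by the standard divisor estimate $\sum_{m\le N}\tau(m)^{k}/m\ll(\log N)^{O_{k}(1)}$, whose exponent depends only on $A$; this produces the polylog factor written $(\log N)^{A}$ in \eqref{218} (with the usual convention that $A$ absorbs $A$-dependent constants). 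Combining the two error terms with a common factor $Q^{1/2}(\log N)^{A}$ yields the stated remainder.

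The argument is therefore essentially bookkeeping on top of the two ingredients \eqref{212} and \eqref{215}; the only place requiring a little care is verifying that the diagonal coefficient sum $\sum_{m}|a_{m}b_{m}|\delta(m)$ truncates at $m\le N$ and is dominated by the divisor moment, but this is immediate from the support of $F$ and the bounds on $a_m,b_m$. The genuine work, of course, lives in the earlier estimates for $\Delta(m,m)$ and $\Delta(1,1)$; granting those, there is no substantive obstacle, and the main term $\Psi\mathfrak{S}Q\sum_{m}a_{m}b_{m}\delta(m)F(m,m)$ with error $O(Q^{1/2}(\log N)^{A})$ follows directly.
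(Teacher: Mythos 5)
Your proposal is correct and is essentially the paper's argument: the paper states \eqref{212} and \eqref{215} and then asserts \eqref{218} with a bare ``Hence,'' the intended proof being exactly your substitution of these two asymptotics into \eqref{211} followed by bounding the error sums via \eqref{24}, the support and bound on $F$ from \eqref{23}, $\delta(m)\le 1$, and a divisor-moment estimate, with the exponent $A$ in $(\log N)^{A}$ absorbed under the paper's stated Convention. No substantive difference from the paper's route.
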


\begin{convention}
The exponent $A$ in \eqref{218} depends on that in the growth conditions \eqref{24}.  For notational convenience we are going
to use $A$ as an exponent which is not necessarily the same in each occurence, it is allowed to depend on foregoing acceptable
constants.
\end{convention}

Besides the primary terms of the diagonal, a secondary source for contribution to the main term is not so obvious as the diagonal one;
it rests in narrow strips parallel to the diagonal.  A substantial contribution may come out of the terms
$a_{m}b_{n}F(m,n)$ with $|m-n| \asymp Q$, but not from strips of much smaller width.  Hence we shall get quickly

\begin{theorem}\label{theorem22}
For any complex numbers $a_{m},b_{n}$ satisfying \eqref{24} and $F(m,n)$
satisfying \eqref{23} with $N \leq Q^{1-\eps}$ we have
\begin{equation}\label{219}
\SS(\AA\times\BB) = \SS_{diag}(\AA\times\BB) + O(Q(\log{Q})^{-C})
\end{equation}
for any $C >0$, the implied constant depends only on $\eps$, $C$ and $A$ in \eqref{24}.
\end{theorem}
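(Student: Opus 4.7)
\emph{Proof plan.} The strategy is to open up $\Delta(m,n)$ via the standard identity for the sum over primitive characters, to recognise the $m=n$ contribution as $\SS_{\mathrm{diag}}(\AA\times\BB)$, and to bound the off--diagonal by exploiting cancellation of the Möbius function. The hypothesis $N\le Q^{1-\eps}$ will ensure that the Möbius--sum argument has a long enough range to yield logarithmic savings.

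\emph{Step 1: opening $\Delta$.} When $(mn,q)=1$ the standard orthogonality identity gives
$$\sumstar_{\chi\,(\!\bmod q)}\chi(m)\chibar(n)=\sum_{\substack{d\mid q\\ d\mid m-n}}\varphi(d)\mu(q/d),$$
and the sum vanishes when $(mn,q)>1$. Writing $q=de$ and substituting into \eqref{29},
$$\Delta(m,n)=\sum_{\substack{d\mid m-n\\(d,mn)=1}}\varphi(d)\,U_d(mn),\qquad U_d(K):=\sum_{(e,K)=1}\frac{\mu(e)\,\Psi(de/Q)}{\varphi(de)}.$$
The $m=n$ case recovers \eqref{212}--\eqref{215}, so its contribution to $\SS(\AA\times\BB)$ is exactly $\SS_{\mathrm{diag}}(\AA\times\BB)$. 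It remains to bound
$$\SS_{\mathrm{off}}:=\sum_{m\ne n}a_mb_nF(m,n)\sum_{\substack{d\mid m-n\\(d,mn)=1}}\varphi(d)\,U_d(mn).$$

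\emph{Step 2: Möbius cancellation (key input).} The technical heart of the argument is the bound
$$|U_d(K)|\ll_C\frac{\tau(K)^B}{\varphi(d)}(\log Q)^{-C}$$
for any fixed $C>0$, uniformly in $K\ge 1$ and $1\le d\le Q^{1-\eps/2}$ (some absolute $B$). One derives this by Mellin inversion: $U_d(K)=\frac{1}{2\pi i}\int_{(c)}\widehat{\Psi}(s)(Q/d)^sD_{d,K}(s)\,ds$, where an Euler--product computation factors
$$D_{d,K}(s)=\sum_{(e,K)=1}\frac{\mu(e)}{\varphi(de)\,e^s}=\frac{H_{d,K}(s)}{\zeta(s+1)},$$
with $H_{d,K}$ an absolutely convergent Euler product in a half--plane $\Re s>-c$ satisfying $|H_{d,K}(s)|\ll \tau(K)^B/\varphi(d)$ there. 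Shifting the contour past the pole of $\zeta(s+1)$ at $s=0$ and invoking the classical zero--free region $\sigma>1-c/\log(|t|+2)$ for $\zeta$ yields the stated saving, since $d\le Q^{1-\eps/2}$ keeps $\log(Q/d)\asymp \log Q$.

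\emph{Step 3: off--diagonal bound.} Writing $h=m-n$, Step 2 and $\sum_{d\mid h}1=\tau(h)$ give
$$\Big|\sum_{\substack{d\mid h\\(d,mn)=1}}\varphi(d)\,U_d(mn)\Big|\ll \tau(h)\,\tau(mn)^B(\log Q)^{-C}.$$
Using \eqref{24} and Cauchy--Schwarz on $\sum_n\tau(n)^{2(A+B)}/n$ one finds, for each $h$,
$$\sum_{n\ge 1}\frac{\tau(n+h)^{A+B}\tau(n)^{A+B}}{\sqrt{n(n+h)}}\ll(\log N)^{O(A+B)}.$$
Combining these bounds and summing over $h$,
$$|\SS_{\mathrm{off}}|\ll (\log Q)^{-C}(\log N)^{O(A+B)}\!\!\sum_{0<|h|\le N}\tau(h)\ll N(\log Q)^{-C+O(1)}.$$
Since $N\le Q^{1-\eps}\le Q$, choosing $C$ large makes this $\ll Q(\log Q)^{-C'}$ for any prescribed $C'>0$, completing the proof.

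\emph{Main obstacle.} The only non--routine ingredient is the Möbius cancellation estimate of Step~2. Extracting the clean $1/\varphi(d)$ dependence with only a divisor--function loss in $K$ requires careful tracking of the local factors of $D_{d,K}(s)$ at primes dividing $d$ and $K$; the complex--analytic content, however, is standard prime--number--theorem technology, and the condition $N\le Q^{1-\eps}$ is exactly what keeps the relevant length $Q/d$ genuinely large so that these savings propagate.
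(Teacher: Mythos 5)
Your proof is correct, and it takes a genuinely different route from the paper. The paper's mechanism (Sections \ref{section3}--\ref{section5}) splits the $c$-sum in $\Delta(m,n)$ at a parameter $C=Q^{\eps/2}$: the part with $c\le C$ has $d\asymp Q/c\ge Q^{1-\eps/2}>N\ge|m-n|$, so it vanishes off the diagonal, while the complementary part is Möbius-inverted back into primitive characters of small conductor $k\ll Q/C$ and the resulting bilinear form is controlled by the classical large sieve inequality \eqref{14}, exactly as in the treatment of $\SS_{2}$. You instead never return to characters at all: you keep the divisor expansion $\Delta(m,n)=\sum_{d\mid m-n}\varphi(d)U_d(mn)$ and prove a \emph{pointwise} off-diagonal bound, exploiting that every divisor $d$ of $m-n$ is at most $N\le Q^{1-\eps}$, so the complementary Möbius sum over $e\asymp Q/d\ge Q^{\eps}$ is long; your Euler-product factorization $D_{d,K}(s)=H_{d,K}(s)/\zeta(s+1)$ and the zero-free region then give $\varphi(d)U_d(mn)\ll\tau(mn)^{B}(\log Q)^{-C}$, and the trivial summation over $m\ne n$ finishes. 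Two remarks. First, what you call the main obstacle is not actually needed here: the completely trivial bound $|U_d(mn)|\le\sum_{de\asymp Q}1/\varphi(de)\ll(\log\log Q)/d$ already gives $|\Delta(m,n)|\ll\tau(|m-n|)\log\log Q$ off the diagonal, hence an off-diagonal total $\ll N(\log Q)^{O_A(1)}\ll Q^{1-\eps}(\log Q)^{O_A(1)}$, which beats $Q(\log Q)^{-C}$ by a power of $Q$; your PNT-strength Step 2 is correct (and gives the stronger $\exp(-c\sqrt{\log Q})$ saving), but the hypothesis $N\le Q^{1-\eps}$ alone does the job. (A tiny wording slip: the integrand has no pole at $s=0$, since $1/\zeta(s+1)$ vanishes there; this only helps you, as there is no main term to extract.) Second, what the paper's heavier route buys is scalability: your pointwise bound collapses as soon as $|m-n|$ can be of size $Q$ (divisors $d\asymp Q$ leave no room in the $e$-sum), which is precisely the regime the paper's decomposition, Euler--Maclaurin step and large-sieve estimates are designed to reach in Theorems \ref{theorem23} and \ref{theorem24} with $N\le Q^{2-\eps}$; for the range $N\le Q^{1-\eps}$ of Theorem \ref{theorem22}, your direct argument is a clean and complete substitute.
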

If $N$ is much larger than $Q$, then one is faced with the problem of asymptotic evaluation of double sums
$\sum\sum a_{m}b_{n}F(m,n)$ with relatively small $m-n$, and it is difficult to grasp such skinny domains when counting
with most general coefficients $a_{m},b_{n}$.
These off-diagonal sums, which may yield the secondary main term, result from switching moduli, hence the plain sum
without twists by characters emerged here despite that the principal character is absent in our original family
(the completeness in arithmetic comes to prominence!)

Our goal is to show that \eqref{219} holds for $N \leq Q^{2-\eps}$, but subject to some conditions on the coefficients $a_{m}$,$b_{n}$.
\begin{theorem}\label{theorem23}
Let $\AA = (a_{m})$ be given by \eqref{25} with any complex numbers $\rho(r)$ for $1 \leq r \leq X$ satisfying
\begin{equation}\label{220}
|\rho(r)| \leq \tau(r)^{A}
\end{equation}
for some constant $A \geq 0$.  Moreover, suppose that the $L$-function \eqref{26} has Euler product of degree $g \leq 3$, and no character
is singular for $\mathcal{L}(s)$, i.e.
\begin{equation}\label{221}
\mathcal{L}(s,\chi) = \sum_{1}^{\infty}\lambda(l)\chi(l)l^{-s}
\end{equation}
is entire for all $\chi$.  Assume similar conditions for $\BB = (b_{n})$.    Then \eqref{219} holds if $N \leq Q^{2-\eps}$ and
\begin{equation}\label{222}
X \leq Q^{1-\eps} \quad \text{if}\,\,\,g=1,2,
\end{equation}
\begin{equation}\label{223}
X \leq Q^{1/2} \quad \text{if}\,\,\,g=3.
\end{equation}
\end{theorem}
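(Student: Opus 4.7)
\medskip
\noindent\textbf{Proof plan for Theorem \ref{theorem23}.}

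The starting point is to decompose $\SS(\AA\times\BB) = \SS_{\text{diag}}(\AA\times\BB) + \SS_{\text{off}}(\AA\times\BB)$, where $\SS_{\text{off}}$ collects the terms with $m \neq n$. The diagonal part is already given by the Proposition, so by Theorem \ref{theorem22} all the work lies in showing
$$\SS_{\text{off}}(\AA\times\BB) \ll Q(\log Q)^{-C}$$
in the range $Q^{1-\eps} < N \leq Q^{2-\eps}$ under the mollifier hypotheses \eqref{222},\eqref{223}. The plan is to exploit the factorization \eqref{25}, push the $l,l'$ variables to a dual shape through the functional equation of $\mathcal{L}(s,\chi)$, and then invert the order of summation so that the $q$-average acts like a Kloosterman-type character sum.

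First I would insert \eqref{25} for both $\AA$ and $\BB$, separate the $m,n$ dependence in $F$ by Mellin inversion, and write
$$\SS_{\text{off}} = \sum_{r,r'\leq X}\frac{\rho(r)\rho'(r')}{\sqrt{rr'}}\,\oint\!\!\oint\widehat F(u,v)\,r^{-u}r'^{-v}\,\Omega_{u,v}(r,r')\,du\,dv$$
where the inner object is (essentially)
$$\Omega_{u,v}(r,r') = \sum_q \frac{\Psi(q/Q)}{\varphi(q)}\sumstar_{\chi(\bmod q)}\chi(r)\chibar(r')\,\mathcal{L}_N(\tfrac12+u,\chi)\,\overline{\mathcal{L}_N}(\tfrac12+v,\chi)$$
with $\mathcal{L}_N$ a smooth $N/r$-truncated variant of \eqref{221}. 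The hypothesis that no character is singular makes $\mathcal{L}(s,\chi)$ entire, so I can shift the contours past $\Re s = 1/2$ and apply the degree-$g$ functional equation to each of the two $\mathcal{L}_N$-factors. This converts the $l$-sum of length $N/r$ into a dual sum of length $\asymp q^g r/N$, and produces a factor $\epsilon(\chi)^{2}$ built from Gauss sums.

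Next I would swap the order and carry out the character average. After opening the Gauss sums as additive characters, the sum over primitive $\chi$ mod $q$ (weighted by $\varphi(q)^{-1}\Psi(q/Q)$) becomes a hyper-Kloosterman-type sum in the dual variables $l^*,l'^*\ll Q^{g}X/N$; here I expect a ``new diagonal'' $l^{*}r \equiv l'^{*}r' \pmod q$ whose contribution turns out to be either absorbed into $\SS_{\text{diag}}$ or negligible, plus a genuinely off-diagonal Kloosterman contribution. The Weil/Deligne bound, combined with the bound $|\rho(r)|\leq\tau(r)^{A}$ from \eqref{220} and the divisor bound on $\lambda$, then yields an estimate of the shape
$$\SS_{\text{off}} \ll Q^{\eps}\,\bigl(Q \;+\; X^{2}N^{-1/2}Q^{g/2}\bigr),$$
so the saving $(\log Q)^{-C}$ is forced as soon as $X^{2}Q^{g/2} \leq N Q^{1-\eps}$. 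Taking the worst case $N = Q^{2-\eps}$ gives precisely \eqref{222} for $g=1,2$ and \eqref{223} for $g=3$.

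The principal obstacle is the case $g=3$: the Voronoi/functional-equation dualization produces a sum of length $Q^{3}X/N$, so for $N$ close to $Q^{2}$ the dual variables live in a range of size $\asymp Q X$, and the Gauss-sum factor $\epsilon(\chi)^{2}$ no longer collapses cleanly. Handling the resulting sum over $q$ and over the hyper-Kloosterman sums in three variables, without losing powers of $\log Q$, is where the stronger restriction $X\leq Q^{1/2}$ becomes essential; everything else (the mollifier manipulation, the Mellin separation, the contour shift) is routine by comparison, but this final character-sum estimate is the heart of the matter.
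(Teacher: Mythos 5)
Your route is genuinely different from the paper's, and as it stands it has a real gap at its center. The entire weight of your argument rests on the step you yourself flag as the hard part: after applying the functional equation of $\mathcal{L}(s,\chi)$ at full conductor $q\asymp Q$ and opening the $\epsilon$-factors (which for degree $g$ are essentially $\tau(\chi)^{g}q^{-g/2}$, not a single Gauss sum), you must average over primitive $\chi\pmod q$ and over $q\asymp Q$ and obtain square-root cancellation in the resulting hyper-Kloosterman sums. That estimate is merely asserted; no argument is given for how the Weil/Deligne bound for individual complete sums survives the incomplete ranges, the $q$-average, and the mollifier variables without fatal losses, and for $g=3$ with dual length $\asymp Q^{3}X/N$ this is precisely the kind of moment problem the method of this paper is designed to avoid. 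Moreover, even granting your claimed bound $\SS_{\mathrm{off}}\ll Q^{\eps}(Q+X^{2}N^{-1/2}Q^{g/2})$, it does not prove the theorem: the second term \emph{grows} as $N$ decreases, so the critical case in the range $Q^{1-\eps}<N\leq Q^{2-\eps}$ is the lower end (e.g.\ $N\asymp Q^{3/2}$, $g=3$, $X=Q^{1/2}$ gives $Q^{7/4+\eps}$), and at $N=Q^{2-\eps}$ the condition $X^{2}Q^{g/2}\leq NQ^{1-\eps}$ gives $X\leq Q^{5/4}$, $Q$, $Q^{3/4}$ for $g=1,2,3$, which does not reproduce \eqref{222}--\eqref{223}; so the claimed numerology does not close.

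The paper never dualizes at conductor $q\asymp Q$ and uses no exponential-sum input beyond the large sieve. Instead, primitivity is exploited arithmetically: writing $\sumstar_{\chi(q)}\chi(m)\chibar(n)=\sum_{cd=q,\,d\mid m-n}\mu(c)\varphi(d)$ and splitting at $c\leq C$ versus $c>C$, one reduces (Lemmas \ref{lemma31} and \ref{lemma33}, after switching $d\mid m-n$ to the complementary divisor) to primitive characters of small conductor $k\leq K\asymp Q/C$ or $k\ll CN/Q$. The free variable $l$ is summed by Euler--Maclaurin, the two zero-order terms cancel exactly (Lemma \ref{lemma45}), and the remainders give $\SS_{2}$, handled by the classical LSI (Proposition \ref{proposition51}), and $\SS^{*}$, handled by Mellin separation plus the hybrid LSI together with the approximate functional equation only through Lemma \ref{lemma73}, yielding $\SS^{*}\ll(QC^{-1}+X)(\log Q)^{A}$ for $g\leq2$ and with $X$ replaced by $N^{1/3}X^{2/3}=XQ^{1/2}$ for $g=3$; this is exactly where \eqref{222} and \eqref{223} come from. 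If you want to salvage your approach you would have to actually prove the character-sum estimate and redo the bookkeeping uniformly in $N$, but you should be aware that the theorem does not require any such cancellation: the conductor reduction plus the (hybrid) large sieve suffices.
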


\begin{note}
It needs to be said that by an $L$-function we mean one of those whose twists by primitive characters satisfy
proper functional equation, see Section 5.1 of \cite{IK}.
\end{note}

\begin{remark1}
The case $g=4$ could be also covered by our arguments giving \eqref{219} if $X \leq Q^{\delta}$,
however it is not interesting because our condition $N \leq Q^{2-\eps}$ does not go far enough for applications.
The problem is that \eqref{221} of degree $g=4$ can be approximated by two partial sums of length $N \asymp Q^{2}$, but not shorter.
\end{remark1}

Actually the case of degree $g=1$ is void in the statement of Theorem \ref{theorem23}.
Indeed in this case $\mathcal{L}(s,\chi)$ is just a Dirichlet $L$-function, and for $\chi = 1$ it becomes
$\mathcal{L}(s) = \zeta(s)$ which has a simple pole at $s=1$.  In order to cover this case we make an extra cancellation
condition for the factor sequence $\rho(r)$;
\begin{equation}\label{224}
\sum_{r \leq y}\rho(dr) \ll \tau(d)y(\log{y})^{-C}, \quad \text{if}\,\,y \geq 2
\end{equation}
with any $C \geq 0$, the implied constant depending on $C$.

\begin{theorem}\label{theorem24}
Let $\AA = (a_{m})$, $\BB = (b_{n})$ be sequences given by \eqref{25} satisfying the conditions of
Proposition \ref{proposition81}.  Then \eqref{219} holds if $X \leq Q^{1-\eps}$ and $N \leq Q^{2-\delta}$.
\end{theorem}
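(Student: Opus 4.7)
The plan is to adapt the strategy of Theorem~\ref{theorem23} to the degree-one case $\mathcal{L}(s)=\zeta(s)$, using the cancellation condition \eqref{224} for $\rho$ as a substitute for the two features lost in passing from $g=2,3$ to $g=1$: the oscillation of $\lambda(l)$, which is now trivial, and the absence of a pole in the relevant Dirichlet series, since $\zeta(s)$ has one at $s=1$.

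First, I would insert the convolution \eqref{25} with $\lambda(l)=1$ and write the off-diagonal contribution as
\begin{equation*}
\SS(\AA\times\BB) - \SS_{\text{diag}}(\AA\times\BB) = \sum_{r_1, r_2 \leq X}\frac{\rho(r_1)\overline{\rho(r_2)}}{\sqrt{r_1 r_2}}\sum_{(l_1, l_2)\,:\,l_1 r_1\neq l_2 r_2}\frac{F(l_1 r_1, l_2 r_2)}{\sqrt{l_1 l_2}}\,\Delta(l_1 r_1, l_2 r_2).
\end{equation*}
For $X \leq Q^{1-\eps}$ and $N \leq Q^{2-\delta}$ the $l$-variables range over a box of length at most $N/X$, which is essentially of size $Q^{1+O(\eps)}$ in each factor, so the structural off-diagonal decomposition of $\Delta(m,n)$ developed earlier in the paper (built on moduli switching together with the remainder controlled by \eqref{212}) can be applied term by term.

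Second, I would analyse the resulting sums. In the $g\geq 2$ proof the inner $l_1,l_2$ sums carry factors $\lambda(l_1)\lambda(l_2)$ and are controlled via entirety of the twisted $L$-functions \eqref{221} and their functional equations. For $g=1$ the $\lambda$'s are gone, and after moduli-switching plus functional-equation manipulation a residual piece emerges corresponding to the pole of $\zeta(s)$ at $s=1$. This piece has the schematic bilinear shape
\begin{equation*}
Q\,\sum_{d_1, d_2}W(d_1,d_2)\Big(\sum_{r_1}\rho(d_1 r_1)U_1(r_1)\Big)\Big(\sum_{r_2}\overline{\rho(d_2 r_2)}U_2(r_2)\Big),
\end{equation*}
where $W,U_1,U_2$ are smooth weights of at most polynomial-in-$\log Q$ size encoding the switching data. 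At this point \eqref{224} enters: partial summation upgrades the hypothesis $\sum_{r\leq y}\rho(dr)\ll\tau(d) y (\log y)^{-C}$ into a saving of $(\log Q)^{-C}$ per $r$-factor over the trivial bound, and combined with divisor-type bounds on $W$ this controls the residual contribution by $Q(\log Q)^{-C}$. The remaining ``generic'' off-diagonal sums (those already admissible in the $g\geq 2$ case) are disposed of by the same arguments as in Theorem~\ref{theorem23}, with $X\leq Q^{1-\eps}$ absorbing the $r$-sums trivially and $N\leq Q^{2-\delta}$ giving the length room needed by moduli switching.

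\textbf{Main obstacle.} The principal difficulty is the third step: cleanly separating $r_1,r_2$ from the rest of the expression so that \eqref{224} can be applied factor by factor. This separation proceeds by Mellin inversion in the spirit of \eqref{18}, but the pole of $\zeta$ makes the contour shift delicate; one must extract the residue as an explicit main term and verify that it is precisely the diagonal-like contribution already absorbed into $\SS_{\text{diag}}$, leaving only a bilinear tail to which \eqref{224} applies. A secondary technical point is that the interplay between $X\leq Q^{1-\eps}$ and $N\leq Q^{2-\delta}$ must leave margin to push \emph{both} $r$-variables past the character averaging without any single dyadic block of $l$'s becoming too short for the functional equation to be effective; choosing $\delta$ larger than $\eps$ by a fixed margin is what powers this.
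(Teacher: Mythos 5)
Your high-level architecture does match the paper's: the non-singular characters are handled exactly as in Theorem \ref{theorem23} (the $\SS_{2}$ piece by the large sieve, the $\SS^{*}$ piece by Proposition \ref{proposition61} with the singular character removed), and the genuinely new issue for $g=1$ is the trivial-character contribution $\SS^{+}$ coming from $\Delta^{+}(m,n)$ in \eqref{416}. But the step you yourself flag as the ``main obstacle'' --- separating $r_{1},r_{2}$ so that \eqref{224} can be applied --- is precisely the new content of the proof, and your sketch of it is both unresolved and misdirected. The paper never shifts a Mellin contour past the pole of $\zeta$ for this piece, and there is no residue to be matched against $\SS_{diag}$: the diagonal term is extracted from $m=n$ at the outset, and $\SS^{+}$ is an \emph{independent} candidate for a secondary main term which must be shown to be $\ll Q(\log Q)^{-C}$ outright; the whole purpose of hypothesis \eqref{224} is to kill it, not to identify it with the diagonal. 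Pursuing your plan of verifying that the extracted residue ``is precisely the diagonal-like contribution already absorbed into $\SS_{diag}$'' would fail, since no such identity holds.

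The mechanism that actually achieves the separation is real-variable, and it relies on exactly those hypotheses of Proposition \ref{proposition81} that your sketch never invokes: $\rho_{A},\rho_{B}$ supported in dyadic segments and $F$ satisfying \eqref{88} and \eqref{88'}. After discarding the range of small $y$ by the crude bound \eqref{86}, the conditions \eqref{88} force $l_{1},l_{2}\gg Q^{\delta-\eps}$ in the remaining range, so the inner sums over the long, smoothly weighted $l$-variables can be replaced by integrals with negligible error; the $r$-sums then factor out exactly as $\sum_{r}\rho_{A}(r)/[r,d_{1}]$ and $\sum_{r}\rho_{B}(r)/[r,d_{2}]$, and the second condition in \eqref{88} guarantees $X_{A}\geq d_{1}Q^{\delta/2-\eps}$ or $X_{B}\geq d_{2}Q^{\delta/2-\eps}$, i.e. at least one $r$-range is long relative to its modulus, which is what lets \eqref{224} (via partial summation) produce the $(\log Q)^{-C}$ saving. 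Without the localization \eqref{88} and the dyadic support you have neither the clean factorization of the $r$-variables nor any lower bound on the length of the $r$-sum relative to $d_{1}$ or $d_{2}$, and \eqref{224} then gives nothing. Note finally that the functional equation plays no role for the trivial character; it enters (through the approximate functional equation and the hybrid large sieve, Lemma \ref{lemma73}) only in the treatment of $\SS^{*}$, where entirety of the twisted $L$-functions is available.
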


It is obvious what factors $\rho(r)$ are in our mind when creating the condition \eqref{224}.
Since in applications the $\rho(r)$ come as coefficients in a mollifier of $\zeta(s)$, it takes the form
\begin{equation}\label{225}
\rho(r) = \mu(r)w(r)
\end{equation}
where $w(r)$ is a smooth function supported on $1 \leq r \leq X$ with
\begin{equation}\label{226}
|w(r)| \leq 1, \qquad r|w'(r)| \leq 1.
\end{equation}
In this case the cancellation condition \eqref{224} follows from \eqref{11}.

Of course, there are other choices for $\rho(r)$ satisfying \eqref{224}, for
example the Fourier coefficients of a cusp form.

\section{Decomposition of $\Delta(m,n)$}\label{section3}

The proofs of theorems from Section \ref{section2} go in several steps.  We begin by decomposing $\Delta(m,n)$
into distinct parts and split the bilinear form $\SS(\AA\times\BB)$ accordingly.

If $(mn,q)=1$ then
$$\sumstar_{\chi \,(\!\bmod{q})}\chi(m)\chibar(n) = \sum_{\substack{cd=q\\d \mid (m-n)}}\mu(c)\varphi(d).$$
Hence for any $m,n$ we have
$$\Delta(m,n) = \sum\sum_{\substack{(cd,mn)=1\\d\mid(m-n)}}\Psi\Big(\frac{cd}{Q}\Big)
\frac{\mu(c)\varphi(d)}{\varphi(cd)}.$$
We split this into
\begin{equation}\label{31}
\Delta(m,n) = \Delta'(m,n) + \Delta''(m,n),
\end{equation}
say, where $\Delta'(m,n)$ is the double sum over $c$,$d$ restricted by $c \leq C$ and
$\Delta''(m,n)$ is the complimentary sum, which is restricted by $c > C$.  Here $C$ is at our disposal
and it will be chosen later as a small power of $Q$.

We continue decomposing $\Delta''(m,n)$.  First notice that $cd \asymp Q$,
hence for $c > C$ we have $d \ll QC^{-1}$.  Since $d$ is relatively small we reverse the above transformation, that is
we return to characters detecting the congruence $m \equiv n \pmod{d}$ by the
orthogonality formula \eqref{13}.
We write
$$\frac{1}{\varphi(d)}\sum_{\chi \,(\!\bmod{d})}\chi(m)\chibar(n) =
\frac{1}{\varphi(d)}\sum_{\substack{kl=d\\(l,mn)=1}}\sumstar_{\chi \,(\!\bmod{k})}\chi(m)\chibar(n).$$
On the left side $\chi$ runs over all characters of modulus $d$ whereas on the right side $\chi$ runs over the primitive characters of conductor $k$ for every
$k \mid d$.  The complimentary factor $l$ appears as a free variable (well, almost free apart from minor arithmetical
obstacles).  Now we get
$$\Delta''(m,n) = \sum_{\substack{c > C\\(cl,mn)=1}}\mu(c)\sum_{l}\sum_{k \leq K}
\frac{\Psi(ckl/Q)}{\varphi(ckl)}\sumstar_{\chi \,(\!\bmod{k})}\chi(m)\chibar(n)$$
where
\begin{equation}\label{32}
K \asymp QC^{-1}.
\end{equation}
The restriction $k\le K$ in $\Delta''(m,n)$ with $K$ of size \eqref{32} is redundant with the support of $\Psi$ because $c > C$
and $l \geq 1$, nevertheless we keep it for the record.  At some point later we shall not be able to track the restriction
$k \leq K$ from the support of $\Psi$ when we lose sight of it in Fourier analysis.
Note that $cl \geq C > 1$, so by M\"{o}bius inversion we can switch the range $c > C$ to $c \leq C$ getting

\begin{lemma}\label{lemma31}
For every $m,n \geq 1$ we have
\begin{equation}\label{33}
\Delta''(m,n) = -\sum_{\substack{c \leq C\\(cl,mn)=1}}\mu(c)\sum_{l}\sum_{k \leq K}
\frac{\Psi(ckl/Q)}{\varphi(ckl)}\sumstar_{\chi \,(\!\bmod{k})}\chi(m)\chibar(n).
\end{equation}
\end{lemma}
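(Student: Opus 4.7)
The plan is to perform one step of M\"obius inversion on the expression for $\Delta''(m,n)$ derived just before the lemma. That display reads
$$\Delta''(m,n) = \sum_{\substack{c>C\\(cl,mn)=1}}\mu(c)\sum_{l}\sum_{k\leq K}\frac{\Psi(ckl/Q)}{\varphi(ckl)}\sumstar_{\chi\,(\!\bmod k)}\chi(m)\chibar(n),$$
and the critical observation, already flagged in the text, is that the hypothesis $c>C\geq 1$ forces the product $cl$ to exceed $C>1$ in every surviving summand.

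First I would extend the outer sum to all $c\geq 1$ subject to $cl>1$ and check that the completed sum vanishes. Regroup by setting $u=cl$ with $k$ held fixed. The coprimality condition $(cl,mn)=1$ reduces to $(u,mn)=1$; meanwhile $\Psi(ckl/Q)/\varphi(ckl)$ and the inner character sum depend only on $u$ and $k$, not separately on $c$ and $l$. Hence the $c$-sum collapses to $\sum_{c\mid u}\mu(c)$, which by M\"obius orthogonality is $0$ unless $u=1$; and the $u=1$ contribution is exactly what the side condition $cl>1$ rules out. So the completed M\"obius sum is identically zero.

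Splitting this identity as $\sum_{c\geq 1,\,cl>1}=\sum_{c>C}+\sum_{c\leq C,\,cl>1}$ and rearranging produces $\Delta''(m,n)=-\sum_{c\leq C,\,cl>1}\mu(c)(\cdots)$, which is Lemma~\ref{lemma31}. The restriction $cl>1$ is suppressed from the final display because the leftover case $c=l=1$ contributes nothing: with $C$ chosen as a small power of $Q$, one has $K=Q/C$ well below the support of $\Psi(\cdot/Q)$, so the constraint $k\leq K$ kills that term automatically. There is no substantive obstacle — the only care needed is the bookkeeping of coprimality under the change of variables $u=cl$, which decouples cleanly because $(cl,mn)=1$ is equivalent to $(c,mn)=(l,mn)=1$.
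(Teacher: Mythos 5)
Your argument is correct and is essentially the paper's own: the identity is exactly the M\"obius inversion flagged in the text, obtained by completing the sum over all $c$, collapsing $\sum_{c\mid u}\mu(c)$ for $u=cl$ fixed, and using that $u=1$ does not occur because $cl\geq c>C>1$ in the original range. Your added observation that the leftover $c=l=1$ term in the $c\leq C$ sum vanishes, since $k\leq K\asymp QC^{-1}$ lies below the support of $\Psi(\cdot/Q)$, correctly supplies the one detail the paper leaves implicit.
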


It is in the above transformation where properties of the M\"{o}bius function are used with manifestation that
our original bilinear form $\SS(\AA\times\BB)$ goes over the primitive characters rather than over all characters.

What we accomplished in the expression \eqref{33} is that the involved characters have conductor $k \leq K$ significantly smaller than $q \asymp Q$ in
$\SS(\AA\times\BB)$.  We postpone further transformations of $\Delta''(m,n)$ to the next section.

Now we are going to perform transformations of $\Delta'(m,n)$, although quite differently than in $\Delta''(m,n)$, but with similar objective to reduce the size of the conductor of involved characters.  Recall that
\begin{equation}\label{34}
\Delta'(m,n) = \sum\sum_{\substack{(cd,mn)=1\\c\leq C, d \mid (m-n)}}\Psi\Big(\frac{cd}{Q}\Big)\frac{\mu(c)\varphi(d)}{\varphi(cd)}.
\end{equation}
For $m=n$ (the diagonal terms in $\SS(\AA\times\BB)$) the condition $d \mid (m-n)$
holds automatically and $\Delta'(m,n)$ is close to the full sum
\begin{eqnarray}\label{35}
\Delta(m,n) &=&\sum\sum_{(cd,m)=1}\Psi\Big(\frac{cd}{Q}\Big)\frac{\mu(c)\varphi(d)}{\varphi(cd)}
\\\nonumber &=& \sum_{(q,m)=1}\Psi\Big(\frac{q}{Q}\Big)\frac{\varphi^{*}(q)}{\varphi(q)}.
\end{eqnarray}
The difference is estimated trivially by
$$\sum_{c > C}\frac{1}{\varphi(c)}\sum_{d}|\Psi\Big(\frac{cd}{Q}\Big)| \ll
\sum_{c > C}\frac{Q}{c\varphi(c)} \ll \frac{Q}{C}.$$
\begin{lemma}\label{lemma32}
For every $m \geq 1$ we have
\begin{equation}\label{36}
\Delta'(m,m) = \Delta(m,m) + O(QC^{-1}).
\end{equation}
\end{lemma}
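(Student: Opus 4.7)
The statement to prove is that $\Delta'(m,m)=\Delta(m,m)+O(QC^{-1})$, and essentially all the ingredients are already visible in the computation leading up to the lemma. My plan is therefore to organize those ingredients into a short, self-contained argument.

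First, I would reduce the problem to estimating $\Delta''(m,m)$. Since the splitting \eqref{31} gives $\Delta(m,n)=\Delta'(m,n)+\Delta''(m,n)$ for all $m,n$, it suffices to show $\Delta''(m,m)\ll Q/C$. Specializing to $n=m$ is the key simplification: the divisibility constraint $d\mid (m-n)$ in the definition of $\Delta'(m,n)$ (and hence in $\Delta''(m,n)$ as its complement inside the $c>C$ range) becomes vacuous because $d\mid 0$ for every $d$. Consequently
\begin{equation*}
\Delta''(m,m)=\sum_{\substack{c>C\\(c,m)=1}}\mu(c)\sum_{(d,m)=1}\Psi\!\left(\frac{cd}{Q}\right)\frac{\varphi(d)}{\varphi(cd)}.
\end{equation*}

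Next I would carry out the trivial bound already sketched just before the lemma's statement. For $(c,d)=1$ one has $\varphi(cd)=\varphi(c)\varphi(d)$, so $\varphi(d)/\varphi(cd)=1/\varphi(c)\le 1$; and since $\Psi$ is a fixed smooth function of compact support in $\R^+$, the inner sum over $d$ is bounded by $\sum_d |\Psi(cd/Q)|\ll Q/c$. Combining these,
\begin{equation*}
|\Delta''(m,m)|\ll \sum_{c>C}\frac{1}{\varphi(c)}\cdot\frac{Q}{c}\ll \sum_{c>C}\frac{Q}{c\varphi(c)}\ll \frac{Q}{C},
\end{equation*}
the last step being the standard estimate $\sum_{c>C}(c\varphi(c))^{-1}\ll C^{-1}$ (e.g.\ via $\varphi(c)\gg c/\log\log c$ and partial summation, or more crudely $\varphi(c)\ge c^{1-\eps}$ for a crude bound that still suffices with room to spare).

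There is no real obstacle here: the only thing to be slightly careful about is that on restricting to $n=m$ no spurious terms are created or lost, which follows from the fact that $d\mid 0$ trivially and that the coprimality condition $(cd,mn)=1$ collapses to $(cd,m)=1$. Everything else is an absolute-value estimate. The total error $O(Q/C)$ then yields \eqref{36}.
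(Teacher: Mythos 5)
Your argument is correct and is essentially the paper's own proof: for $m=n$ the condition $d\mid(m-n)$ is vacuous, so $\Delta(m,m)-\Delta'(m,m)$ is the tail $c>C$, bounded trivially by $\sum_{c>C}\frac{1}{\varphi(c)}\sum_d|\Psi(cd/Q)|\ll\sum_{c>C}\frac{Q}{c\varphi(c)}\ll QC^{-1}$. One small point: $c$ and $d$ are not required to be coprime in the sum, but the inequality you need, $\varphi(d)/\varphi(cd)\le 1/\varphi(c)$, holds in general because $\varphi(cd)\ge\varphi(c)\varphi(d)$, so nothing is lost.
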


Now let $m\neq n$.  Note that $d$ in \eqref{34} is pretty large, $d \asymp QC^{-1}$.  Our intention is to switch
the divisor $d$ of $|m-n|$ to the complementary one which is much smaller.  This requires a few steps to relax the minor arithmetical obstacles.
First writing
\begin{equation}\label{37}
\frac{\varphi(d)}{\varphi(cd)} = \frac{1}{\varphi(c)}\sum_{a \mid (c,d)}\frac{\mu(a)}{a}
\end{equation}
we get
$$\Delta'(m,n) = \sum\sum_{\substack{(ac,mn)=1\\ ac\leq C}}\frac{\mu(a)\mu(ac)}{a\varphi(ac)}
\sum_{\substack{(d,mn)=1\\ad \mid (m-n)}}\Psi\Big(\frac{a^{2}cd}{Q}\Big).$$
Next we remove the condition $(d,mn)=1$ by M\"{o}bius formula getting
$$\sum_{\substack{(d,mn)=1\\ad \mid (m-n)}}\Psi\Big(\frac{a^{2}cd}{Q}\Big)
= \sum_{g \mid (m,n)}\mu(g)\sum_{adg\mid(m-n)}\Psi\Big(\frac{a^{2}cdg}{Q}\Big).$$
Here $d$ is a free divisor of $|m-n|/ag$, so we can switch $d$ to its complimentary divisor, say $b$, getting
$$\Delta'(m,n) = \sum\sum_{\substack{(ac,mn)=1\\ac \leq C}}\frac{\mu(a)\mu(ac)}{a\varphi(ac)}
\sum\sum_{\substack{g \mid (m,n)\\abg \mid (m-n)}}\mu(g)\Psi\Big(\frac{ac}{bQ}|m-n|\Big).$$
Pull out the common factor $(b,m/g) = (b,n/g) = h$, say, getting
$$\Delta'(m,n) = \sum\sum_{\substack{(ac,mn)=1\\ac \leq C}}\frac{\mu(a)\mu(ac)}{a\varphi(ac)}
\sum\sum_{gh \mid (m,n)}\mu(g)\sum_{\substack{ab\mid(\mathfrak{m}-\mathfrak{n})\\
(b,\mathfrak{m}\mathfrak{n})=1}}\Psi\Big(\frac{ac}{bhQ}|m-n|\Big)$$
where here and thereafter we put for notational simplicity
\begin{equation}\label{38}
m = gh\mathfrak{m}, \qquad n = gh\mathfrak{n}.
\end{equation}
Since $(ab, \mathfrak{m}\mathfrak{n}) = 1$ we detect the divisibility $ab \mid (\mathfrak{m} - \mathfrak{n})$ by
characters $\chi \pmod{ab}$ getting
\begin{equation*}\begin{split}
\Delta'(m,n) = &\sum\sum_{\substack{(ac,mn)=1\\ac \leq C}}\frac{\mu(a)\mu(ac)}{a\varphi(ac)} \sum\sum_{gh \mid (m,n)}\mu(g)\\
&\sum_{b}\frac{1}{\varphi(ab)}\sum_{\chi \,(\!\bmod{ab})}\chi(\mathfrak{m})\chibar(\mathfrak{n})
\Psi\Big(\frac{ac}{bhQ}|m-n|\Big).
\end{split}\end{equation*}
Finally we express this in terms of primitive characters as follows.
\begin{lemma}\label{lemma33}
For every $m,n \geq 1$, $m \neq n$ we have
\begin{equation}\begin{split}\label{39}
\Delta'(m,n) = &\sum\sum_{\substack{(ac,mn)=1\\ac \leq C}}\frac{\mu(a)\mu(ac)}{a\varphi(ac)} \sum\sum_{gh \mid (m,n)}\mu(g)\\
&\sum\sum_{\substack{kl \equiv 0 \,(\!\bmod{a})\\(l,\mathfrak{m}\mathfrak{n})=1}}\frac{1}{\varphi(kl)}
\sumstar_{\chi \,(\!\bmod{k})}\chi(\mathfrak{m})\chibar(\mathfrak{n})
\Psi\Big(\frac{ac}{klhQ}|m-n|\Big).
\end{split}\end{equation}
\end{lemma}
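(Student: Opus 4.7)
The plan is to start from the last displayed expression for $\Delta'(m,n)$ preceding the lemma (the one containing the outer sum over $b$ and the full character sum $\sum_{\chi\,(\!\bmod{ab})}$), and to convert that inner character sum into a sum over \emph{primitive} characters. The tool is the standard induced-character decomposition: each $\chi\pmod{ab}$ is induced by a unique primitive character $\chi^{*}$ of some conductor $k\mid ab$, and $\chi(n)$ equals $\chi^{*}(n)$ when $(n,ab)=1$ and vanishes otherwise. Writing $l=ab/k$, the vanishing condition at $\mathfrak{m}$ and $\mathfrak{n}$ becomes exactly the requirement $(l,\mathfrak{m}\mathfrak{n})=1$, since the coprimality with $k$ is automatically forced by $\chi^{*}$. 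Hence
$$\sum_{\chi\,(\!\bmod{ab})}\chi(\mathfrak{m})\chibar(\mathfrak{n})
=\sum_{\substack{kl=ab\\(l,\mathfrak{m}\mathfrak{n})=1}}\;\sumstar_{\chi\,(\!\bmod{k})}\chi(\mathfrak{m})\chibar(\mathfrak{n}).$$

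Next I would interchange the outer sum over $b$ with this decomposition. For fixed $a$, as $b$ ranges over the positive integers and $(k,l)$ over the factorisations $kl=ab$, the resulting triples $(b,k,l)$ are in bijection with pairs $(k,l)$ satisfying $a\mid kl$, via $b=kl/a$. Substituting $b=kl/a$ converts $\varphi(ab)$ into $\varphi(kl)$ and rewrites the argument of $\Psi$ accordingly, so the inner double sum collapses to
$$\sum_{\substack{kl\equiv 0\,(\!\bmod{a})\\(l,\mathfrak{m}\mathfrak{n})=1}}
\frac{1}{\varphi(kl)}\sumstar_{\chi\,(\!\bmod{k})}\chi(\mathfrak{m})\chibar(\mathfrak{n})\,
\Psi\Big(\frac{ac}{klhQ}|m-n|\Big),$$
which is precisely the inner structure appearing in \eqref{39}. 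Reinserting this into the expression for $\Delta'(m,n)$ reproduces the identity claimed in the lemma.

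There is no genuine analytic obstacle here: the argument is entirely combinatorial, relying only on the primitive/imprimitive decomposition of Dirichlet characters and a relabelling of summation indices. The one place that calls for a little care is the bookkeeping of coprimality conditions on $\mathfrak{m},\mathfrak{n}$: because $(ab,\mathfrak{m}\mathfrak{n})=1$ is already guaranteed in the preceding display (this is precisely what the earlier extraction of the common factors $g$ and $h$ achieved), the passage to primitive characters produces cleanly the single extra constraint $(l,\mathfrak{m}\mathfrak{n})=1$, with no additional parasitic terms to track.
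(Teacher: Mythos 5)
Your route is the same as the paper's: the paper's entire proof of Lemma \ref{lemma33} consists of exactly the step you describe, namely decomposing each $\chi \pmod{ab}$ in the preceding display into the primitive character inducing it and relabelling the pair $(b,k)$ as $(k,l)$ with $a\mid kl$ via $b=kl/a$. Your handling of the coprimality bookkeeping is also right: the constraint $(\mathfrak{m}\mathfrak{n},ab)=1$ splits into the explicit condition $(l,\mathfrak{m}\mathfrak{n})=1$ plus the condition $(k,\mathfrak{m}\mathfrak{n})=1$, which is enforced automatically because primitive character values vanish there, so no parasitic terms arise.

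One point you gloss over with the phrase ``rewrites the argument of $\Psi$ accordingly'': under your own substitution $b=kl/a$ the argument $\frac{ac}{bhQ}|m-n|$ becomes $\frac{a^{2}c}{klhQ}|m-n|$, not $\frac{ac}{klhQ}|m-n|$ as printed in \eqref{39}, and the weight $1/\varphi(kl)=1/\varphi(ab)$ forces $kl=ab$, so there is no alternative relabelling that makes the printed argument come out while keeping the printed weight. In other words, your derivation (which is the correct one) produces the lemma with $a^{2}c$ in place of $ac$; the printed statement appears to carry a factor-of-$a$ slip relative to the display preceding it. This is harmless downstream, since $a\le ac\le C=Q^{\eps}$, the Euler--Maclaurin main term $\widehat{\Psi}(0)$ extracted in Section \ref{section4} does not depend on the constant inside the argument of $\Psi$, and all remaining terms are only estimated; but a careful write-up should either state the conclusion with $a^{2}c$ or explicitly flag the discrepancy, rather than asserting that the substitution yields the formula exactly as printed.
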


\begin{remark1}
The right side of \eqref{39} vanishes if $m=n$, so we do not need to remember that $m \neq n$ when inserting this expression in to the bilinear form
$\SS(\AA\times\BB)$.  The characters in \eqref{39} have conductor $k \asymp ac|m-n|/lhQ \ll CNQ^{-1}$ which is significantly smaller than $q \asymp Q$ in
$\SS(\AA\times\BB)$.  The variable $l$ is essentially free, it emerges in the transition from arbitrary characters to the primitive ones.
Although the source of $l$ seems to be technical, nevertheless this variable features in forthcoming transformations.
\end{remark1}

\section{The Euler-Maclaurin Summation}\label{section4}

Our next step will be to execute the summation over the variable $l$ in \eqref{33} and \eqref{39}.  Since $l$ is not yet entirely free we are going to
state formulas which liberate $l$ from all relevant constraints.

\begin{lemma}\label{lemma41}
For $(a,s) = 1$ and $l \geq 1$ we have
\begin{equation}\label{41}
\sumflat_{\substack{u \mid l\\(u,a)=1}}\mu((u,s))\frac{\varphi((u,s))}{\varphi(u)} = \left\{\begin{array}{ll}\varphi(a)l/\varphi(al)& \quad \text{if}
\,\, (l,s)=1\\0& \quad \text{otherwise}\end{array}\right.
\end{equation}
where the superscript $\flat$ restricts the summation to squarefree numbers.
\end{lemma}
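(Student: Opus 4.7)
The plan is to verify \eqref{41} by recognizing that both sides are multiplicative functions of $l$, thereby reducing the identity to a check on prime powers.

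First I would note that for squarefree $u$ the summand $\mu((u,s))\varphi((u,s))/\varphi(u)$ splits as a product over the primes dividing $u$, and the constraint $(u,a)=1$ is itself multiplicative. Consequently the left side, viewed as a function of $l$, factors as an Euler-type product
\[
f(l)\;=\;\prod_{p\mid l}\bigl(1+g_{a,s}(p)\bigr),
\]
where $g_{a,s}(p)$ is the contribution from $u=p$: it equals $0$ when $p\mid a$, and $\mu((p,s))\varphi((p,s))/(p-1)$ otherwise. On the right side, the elementary identity $\varphi(al)/\bigl(\varphi(a)l\bigr)=\prod_{p\mid l,\, p\nmid a}(1-1/p)$ shows that when $(l,s)=1$ the quantity $\varphi(a)l/\varphi(al)$ equals $\prod_{p\mid l,\, p\nmid a}p/(p-1)$, which is also multiplicative in $l$.

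Next I would evaluate $1+g_{a,s}(p)$ in the three natural cases. If $p\mid a$ the factor is $1$; if $p\nmid a$ and $p\mid s$ the factor is $1-(p-1)/(p-1)=0$; if $p\nmid a$ and $p\nmid s$ the factor is $1+1/(p-1)=p/(p-1)$. Thus a prime $p\mid l$ annihilates $f(l)$ precisely when $p\nmid a$ and $p\mid s$, which by the hypothesis $(a,s)=1$ is the same as $p\mid(l,s)$. Hence $f(l)=0$ iff $(l,s)>1$, and when $(l,s)=1$ the surviving factors match the right side prime by prime.

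There is no real obstacle in the argument; the only bookkeeping subtlety is that the assumption $(a,s)=1$ is what makes the cases $p\mid a$ and $p\mid s$ mutually exclusive, ensuring that the vanishing branch and the nonvanishing branch on the left side align correctly with the cases on the right side.
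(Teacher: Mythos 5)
Your proof is correct and is essentially the paper's argument: the paper's proof simply says it suffices to check \eqref{41} for $l$ prime (i.e.\ both sides are multiplicative and depend only on the radical of $l$), which is exactly the reduction you carry out, with the three-case check at primes made explicit.
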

\begin{proof}
It suffices to prove \eqref{41} for $l$ prime, in which case the result is easy to check.
\end{proof}
\begin{lemma}\label{lemma42}
For $m,n,u \geq 1$, $u$ squarefree, we have
\begin{equation}\label{42}
\mu((u,mn))\varphi((u,mn)) = \sum\sum\sum_{\substack{\alpha\beta\gamma \mid u\\ \alpha\beta\mid m, \alpha\gamma \mid n}}
\alpha\beta\gamma\mu(\beta\gamma).
\end{equation}
\end{lemma}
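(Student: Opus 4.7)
The plan is to exploit the squarefreeness of $u$ to reduce the identity to the case $u = p$ prime. With $m,n$ fixed, I claim both sides of \eqref{42} are multiplicative functions of the squarefree variable $u$, and the identity then follows from a finite case check.

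For the left-hand side, since $u$ is squarefree, $(u,mn) = \prod_{p \mid u} p^{\,[p \,\mid\, mn]}$; as $\mu\cdot\varphi$ is multiplicative, $\mu((u,mn))\varphi((u,mn))$ factors as a product over the primes of $u$, with the local factor at a prime $p \mid u$ equal to $1$ if $p \nmid mn$ and $-(p-1)$ if $p \mid mn$. For the right-hand side, the squarefreeness of $u$ means that any triple $(\alpha,\beta,\gamma)$ with $\alpha\beta\gamma \mid u$ is determined by distributing each prime divisor $p$ of $u$ among the four options $\{\text{none},\alpha,\beta,\gamma\}$; moreover the divisibility conditions $\alpha\beta \mid m$ and $\alpha\gamma \mid n$ and the factor $\alpha\beta\gamma\,\mu(\beta\gamma)$ all split over these prime choices. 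Hence the triple sum factors as $\prod_{p\mid u} R_p$, where $R_p$ is the corresponding sum attached to the single prime $p$.

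It then suffices to verify that $R_p$ equals $\mu((p,mn))\varphi((p,mn))$. Expanding $R_p$, the contribution from the ``none'' choice is $1$; the contribution $\alpha = p$ occurs only if $p \mid m$ and $p \mid n$, and equals $+p$; the $\beta = p$ contribution occurs only if $p \mid m$, equals $-p$; and the $\gamma = p$ contribution occurs only if $p \mid n$, equals $-p$. Adding these yields $1$ if $p \nmid mn$ and $1 - p = -(p-1)$ in every other case (whether $p$ divides exactly one of $m,n$ or both), matching the local factor on the left.

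The computation is essentially bookkeeping, with no serious obstacle; the only point to be careful about is verifying that the right-hand side really does factor multiplicatively over primes of $u$. This uses squarefreeness twice, first so that $\alpha,\beta,\gamma$ themselves are squarefree and share no prime factors with one another, and second so that the divisibility conditions $\alpha\beta \mid m$ and $\alpha\gamma \mid n$ decouple prime by prime.
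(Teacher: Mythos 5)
Your proof is correct and follows the same route as the paper, which reduces to the case $u=p$ prime (via multiplicativity over the squarefree $u$) and checks the finitely many local cases; you have simply written out the multiplicative factorization and the prime-by-prime verification that the paper leaves as "easy to check." The local computations ($1$ when $p\nmid mn$, and $1-p=-(p-1)$ otherwise) all match.
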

\begin{proof}
It suffices to prove \eqref{42} for $u$ prime, in which case the result is easy to check.
\end{proof}

Applying \eqref{41} to \eqref{33} we obtain
\begin{corollary}\label{corollary43}
For every $m,n \geq 1$ we have
\begin{equation}\begin{split}\label{43}
\Delta''(m,n) = & -\sum_{\substack{c \leq C\\(c,mn)=1}}\sum_{k \leq K}\frac{\mu(c)}{\varphi(ck)}\sumflat_{(u,ck)=1}\mu((u,mn))
\frac{\varphi((u,mn))}{u\varphi(u)}\\
& \sum_{l}l^{-1}\Psi\Big(\frac{ckul}{Q}\Big)\sumstar_{\chi \,(\!\bmod{k})}\chi(m)\chibar(n).
\end{split}\end{equation}
\end{corollary}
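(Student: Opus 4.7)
The plan is to start from the expression \eqref{33} for $\Delta''(m,n)$ given by Lemma \ref{lemma31} and apply Lemma \ref{lemma41} with the choice $a=ck$, $s=mn$ in order to liberate the variable $l$ from the coprimality constraint $(l,mn)=1$. This is the only residual obstruction to $l$ being fully free, since in \eqref{33} the condition $(cl,mn)=1$ factors as $(c,mn)=1$ (which stays attached to the $c$-sum) together with $(l,mn)=1$.

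The first small step is to observe that the hypothesis $(a,s)=1$ of Lemma \ref{lemma41}, namely $(ck,mn)=1$, may be freely assumed: the inner sum over primitive characters $\chi \pmod k$ already vanishes whenever $(mn,k)>1$, because then $\chi(m)\chibar(n)=0$, while $(c,mn)=1$ is explicit in \eqref{33}. With this in hand, I rewrite Lemma \ref{lemma41} in the useful form
$$\frac{\mathbf{1}_{(l,mn)=1}}{\varphi(ckl)} = \frac{1}{\varphi(ck)\,l}\sumflat_{\substack{u\mid l\\(u,ck)=1}}\mu((u,mn))\frac{\varphi((u,mn))}{\varphi(u)},$$
valid for every $l\geq 1$ (both sides vanish when $(l,mn)>1$).

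The remaining work is mechanical: substitute this identity into the $l$-sum of \eqref{33}, interchange the orders of summation over $l$ and $u$, and perform the change of variable that replaces the old $l$ by $ul$ with a new free $l$. This converts the weight $\Psi(ckl_{\text{old}}/Q)/l_{\text{old}}$ into $\Psi(ckul/Q)/(ul)$, and after merging the extra factor $1/u$ with the $u$-dependent weight $1/\varphi(u)$ one recovers precisely the right-hand side of \eqref{43}. Because $\Psi$ is compactly supported every sum in sight is effectively finite, so all reorderings are trivially justified. I do not anticipate any serious obstacle here: the identity is essentially a bookkeeping rearrangement, and the only thing that needs watching is that the coprimality conditions travel correctly through the substitution, which they do thanks to the primitivity of $\chi$.
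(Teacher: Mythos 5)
Your proposal is correct and matches the paper's argument: the paper likewise obtains \eqref{43} by applying Lemma \ref{lemma41} with $a=ck$, $s=mn$ to \eqref{33}, the coprimality hypothesis being harmless exactly because $\sumstar_{\chi\,(\bmod{k})}\chi(m)\chibar(n)=0$ whenever $(mn,k)>1$, followed by the interchange of the $u$- and $l$-sums and the substitution $l\mapsto ul$.
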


Similarly we apply \eqref{41} to \eqref{39}.  We have $l = l'a/(a,k)$ in \eqref{39}, where $l'$ runs over positive integers with
$(l',\mathfrak{m}\mathfrak{n}) = 1$ and $\varphi(kl) = \varphi(l'[a,k])$.  Hence \eqref{41} with $a,s,l$ replaced by $[a,k]$,
$\mathfrak{m}\mathfrak{n}$, $l'$ respectively, yields
\begin{corollary}\label{corollary44}
For every $m,n \geq 1$, $m \neq n$ we have (recall the notation \eqref{38})
\begin{equation}\begin{split}\label{44}
\Delta'(m,n) =& \sum\sum_{\substack{(ac,mn)=1\\ac \leq C}}\frac{\mu(a)\mu(ac)}{a\varphi(ac)}\sum\sum_{gh \mid (m,n)}\mu(g)
\sum_{k}\frac{1}{\varphi([a,k])}\\
&\sumflat_{(u,ak)=1}\mu((u,\mathfrak{m}\mathfrak{n}))\frac{\varphi((u,\mathfrak{m}\mathfrak{n}))}{u\varphi(u)}\sum_{l}l^{-1}
\Psi\Big(\frac{(a,k)c|m-n|}{kulhQ}\Big)\sumstar_{\chi \,(\!\bmod{k})}\chi(\mathfrak{m})\chibar(\mathfrak{n}).
\end{split}\end{equation}
\end{corollary}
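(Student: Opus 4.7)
The plan is to take the expression (3.9) for $\Delta'(m,n)$ from Lemma \ref{lemma33} and execute the $l$-summation by means of Lemma \ref{lemma41}, in complete analogy with the derivation of Corollary \ref{corollary43} from (3.3). In (3.9) the inner summation is over pairs $(k,l)$ subject to $a \mid kl$ and $(l, \mathfrak{m}\mathfrak{n}) = 1$; the goal is to parametrize these constraints so that the surviving $l$-variable becomes unrestricted and suitable for Lemma \ref{lemma41}.

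First I would fix $k$ and write the constraint $a \mid kl$ as $l = l' \cdot a/(a,k)$ with $l' \geq 1$ arbitrary. Since $(ac, mn) = 1$ and $\mathfrak{m}\mid m$, $\mathfrak{n}\mid n$, we have $(a, \mathfrak{m}\mathfrak{n}) = 1$, so $(l, \mathfrak{m}\mathfrak{n}) = 1$ simplifies to $(l', \mathfrak{m}\mathfrak{n}) = 1$. The product becomes $kl = l'[a,k]$, using $[a,k] = ak/(a,k)$, and hence
$$\frac{ac}{klhQ}|m-n| = \frac{(a,k)\,c\,|m-n|}{k\,l'\,h\,Q},$$
giving the argument of $\Psi$ the shape that appears in (4.4). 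Also $\varphi(kl) = \varphi([a,k]\,l')$. Observe that contributions with $(k,\mathfrak{m}\mathfrak{n}) > 1$ vanish because the primitive character factor $\chi(\mathfrak{m})\chibar(\mathfrak{n})$ is then zero, so we may assume $(k,\mathfrak{m}\mathfrak{n})=1$, whence $([a,k],\mathfrak{m}\mathfrak{n}) = 1$ and Lemma \ref{lemma41} applies with $a \to [a,k]$, $s \to \mathfrak{m}\mathfrak{n}$, $l \to l'$.

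Next I would apply Lemma \ref{lemma41} to produce
$$\frac{1}{\varphi(kl)} = \frac{1}{\varphi([a,k])\,l'}\,\sumflat_{\substack{u\mid l'\\(u,[a,k])=1}}\mu((u,\mathfrak{m}\mathfrak{n}))\,\frac{\varphi((u,\mathfrak{m}\mathfrak{n}))}{\varphi(u)},$$
valid when $(l',\mathfrak{m}\mathfrak{n}) = 1$; in the opposite case the right-hand side is $0$ by the "otherwise" clause of Lemma \ref{lemma41}, so the coprimality condition on $l'$ may be dropped after substitution. Interchanging the order of summation, writing $l' = uv$ with $u$ squarefree, $(u,[a,k])=1$, and renaming $v \mapsto l$, yields a free $l$-sum of the form $\sum_l l^{-1}\Psi(\cdots)$ with argument as displayed above (with $l'$ replaced by $ul$). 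Finally I would note that $(u,[a,k]) = 1$ is the same as $(u,ak)=1$, which converts the constraint to the form stated in (4.4).

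The routine bookkeeping is checking the coprimality conditions at each step—particularly justifying dropping $(l',\mathfrak{m}\mathfrak{n})=1$, and verifying that in the resulting formula no extraneous coprimality assumption on $(k,\mathfrak{m}\mathfrak{n})$ need be imposed since such terms contribute zero. The only point that deserves any real care is the correct accounting of $(a,k)$ in the transition $l = l'a/(a,k)$ and in $\varphi(kl) = \varphi([a,k]l')$; once that is in hand, the rest is a direct application of Lemma \ref{lemma41} exactly as in the derivation of Corollary \ref{corollary43}.
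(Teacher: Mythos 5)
Your proposal is correct and follows essentially the same route as the paper: writing $l=l'a/(a,k)$ so that $kl=l'[a,k]$ and $\varphi(kl)=\varphi(l'[a,k])$, then applying Lemma \ref{lemma41} with $a,s,l$ replaced by $[a,k]$, $\mathfrak{m}\mathfrak{n}$, $l'$, and finally interchanging sums and writing $l'=ul$. Your explicit justifications (dropping $(l',\mathfrak{m}\mathfrak{n})=1$ via the vanishing clause of the lemma, and noting that terms with $(k,\mathfrak{m}\mathfrak{n})>1$ die because $\chi(\mathfrak{m})\chibar(\mathfrak{n})=0$) are exactly the routine details the paper leaves implicit.
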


From the support of $\Psi$ in \eqref{44} it follows that $k \ll CNQ^{-1}$.  Assuming
\begin{equation}\label{45}
N \ll Q^{2}C^{-2}
\end{equation}
we find that $k \leq K$ in \eqref{44}, and we keep this redundant restriction in the forthcoming expressions.

Next we execute the free summation over $l$ in \eqref{43} and \eqref{44} by the Euler-MacLaurin formula
\begin{equation}\label{46}
\sum_{l=1}^{\infty}F(l) = \int_{0}^{\infty}[F(t) + \{t\}F'(t)]dt
\end{equation}
where $\{t\}$ denotes the fractional part of $t$.  All we need about $\{t\}$ is the estimate
$$0 \leq \{t\} \leq \min(1,t).$$
In case of \eqref{43} we use \eqref{46} with $F(t) = t^{-1}\Psi(tT^{-1})$ getting
$$\sum_{l=1}^{\infty}l^{-1}\Psi\Big(\frac{l}{T}\Big) = \widehat{\Psi}(0) + T^{-1}\Psi_{2}(T), \qquad \text{for}\,\,T=Q/cku$$
where
\begin{equation}\label{47}
\Psi_{2}(T) = \int_{0}^{\infty}(t^{-1}\Psi(t))'\{tT\}dt \ll \min(1,T).
\end{equation}
In case of \eqref{44} we use \eqref{46} with $F(t) = t^{-1}\Psi(Tt^{-1})$ getting
$$\sum_{l=1}^{\infty}l^{-1}\Psi\Big(\frac{T}{l}\Big) = \widehat{\Psi}(0) + \Psi_{1}(T), \qquad \text{for}\,\,T=|\mathfrak{m}-\mathfrak{n}|
(a,k)c/kuhQ$$
where
\begin{equation}\label{48}
\Psi_{1}(T) = \int\Omega(tT)\{t^{-1}\}dt, \qquad \Omega(x) = (x\Psi(x))'.
\end{equation}
Note that in both cases the integral
$$\int F(t)dt = \int\Psi(t)t^{-1} = \widehat{\Psi}(0)$$
takes the same value which does not depend on $T$.

According to \eqref{46} we write
\begin{equation}\begin{split}\label{49}
&\Delta''(m,n) = \widehat{\Psi}(0)\Delta_{0}''(m,n) + \Delta_{2}(m,n)\\
&\Delta'(m,n) = \widehat{\Psi}(0)\Delta_{0}'(m,n) + \Delta_{1}(m,n)
\end{split}\end{equation}
where the terms $\Delta_{0}''(m,n)$, $\Delta_{0}'(m,n)$ are obtained from \eqref{43}, \eqref{44} respectively, with the summation over $l$
being dropped.  We shall see in the following lemma that these terms cancel out.
\begin{lemma}\label{lemma45}
For every $m,n \geq 1$, $m \neq n$, we have
\begin{equation}\label{410}
\Delta_{0}''(m,n) + \Delta_{0}'(m,n) = 0.
\end{equation}
\end{lemma}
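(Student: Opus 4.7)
The plan is to prove $\Delta_0''(m,n)+\Delta_0'(m,n)=0$ by computing both quantities explicitly and recognizing them as complementary halves of a single divisor sum that vanishes by M\"obius cancellation. The conceptual reason to expect this: the variable $l$ summed in \eqref{43} and \eqref{44} was introduced artificially via Lemma \ref{lemma41} from the original, $l$-free $\Delta(m,n)$, so the $\widehat{\Psi}(0)$-coefficient of $\Delta=\Delta'+\Delta''$ ought to vanish for purely algebraic reasons.

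First I would write both expressions by simply dropping the $l$-sum from \eqref{43} and \eqref{44}. Defining
\[
E_s(a)=\sumflat_{(u,a)=1}\frac{\mu((u,s))\varphi((u,s))}{u\varphi(u)},
\]
a multiplicative function with convergent Euler product, we get
\[
\Delta_0''(m,n)=-\sum_{\substack{c\leq C\\(c,mn)=1}}\mu(c)\sum_{k\leq K}\frac{E_{mn}(ck)}{\varphi(ck)}\sumstar_{\chi\,(\!\bmod k)}\chi(m)\chibar(n),
\]
while $\Delta_0'(m,n)$ is a triple M\"obius sum in $a$, $c$, $g$ featuring the character sum $\sumstar_{\chi\,(\!\bmod k)}\chi(\mathfrak{m})\chibar(\mathfrak{n})$ with $\mathfrak{m}=m/gh$, $\mathfrak{n}=n/gh$, and weight $E_{\mathfrak{m}\mathfrak{n}}(ak)/\varphi([a,k])$.

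I would then unwind the Section \ref{section3} manipulations in parallel. In $\Delta_0'$, I would consolidate the outer M\"obius factors via $c'=ac$ (using $\mu(a)\mu(ac)/(a\varphi(ac))=(\mu(c')/\varphi(c'))\mu(a)/a$), then reverse the complementary-divisor switch from \eqref{34} to \eqref{39} and carry out the $g,h$ summation to re-express $\Delta_0'$ in terms of a divisor $d\mid(m-n)$ directly. In $\Delta_0''$, I would reverse the M\"obius inversion preceding \eqref{33} by the telescoping identity $\sum_{c\geq 1}\mu(c)f(cl)=[cl=1]f(1)$, converting the range $c\leq C$ to $c>C$ up to a sign. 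After these reversals, $\Delta_0''(m,n)$ and $-\Delta_0'(m,n)$ become the $c>C$ and $c\leq C$ pieces, respectively, of the single universal divisor sum
\[
\sum_{\substack{(cd,mn)=1\\d\mid(m-n)}}\frac{\mu(c)\varphi(d)}{\varphi(cd)}\cdot(\text{averaged }k,u\text{-factor}),
\]
whose combined value over all $c\geq 1$ vanishes for $m\neq n$: for each fixed $d\mid(m-n)$ the inner M\"obius sum over $c$ coprime to $mn$ collapses to zero by the standard Eulerian identity, once one has suitably interpreted the (formal) $k,u$-factor.

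The main obstacle is that the unweighted sums over $l$ or $c$ do not converge absolutely in the absence of the $\Psi$ regulator, so every reversal must be verified at the level of multiplicative/Euler-product factorizations rather than by termwise rearrangement. The subtlest bookkeeping lies in the character-sum matching: relating $\sumstar_{\chi\,(\!\bmod k)}\chi(\mathfrak{m})\chibar(\mathfrak{n})$ from $\Delta_0'$ back to primitive character sums evaluated at $m,n$ requires careful extension of primitive characters modulo $k$ to the larger modulus $k\cdot gh$, with M\"obius inversion on the conductor handling the primes that are common to $k$ and $gh$.
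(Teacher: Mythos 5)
Your opening moves (writing out the two $l$-free expressions, observing that the $u$-sum is multiplicative, consolidating $a,c$ into $b=ac$) match the start of the actual argument, but the heart of your plan does not work. The proposed "reversal" of the Section \ref{section3} manipulations is not available for $\Delta_0'$ and $\Delta_0''$, precisely because the variable $l$ (and the $\Psi$-weight attached to it) has been discarded: in \eqref{39} it was the pair $(k,l)$ with $kl=ab$ that reconstructed the modulus detecting $ab\mid(\mathfrak m-\mathfrak n)$, and in the derivation of \eqref{33} the switch between $c>C$ and $c\le C$ used that the summand depended on $c,l$ only through the product $cl$, the $cl=1$ term dying on the support of $\Psi$. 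Once the $l$-sum is replaced by $\widehat{\Psi}(0)$, neither mechanism exists: $\Delta_0'$ is a sum over \emph{all} conductors $k\le K$ of $\sumstar_{\chi\,(\bmod k)}\chi(\mathfrak m)\chibar(\mathfrak n)$ (these Ramanujan-type sums are nonzero for many $k$ not dividing $\mathfrak m-\mathfrak n$), so it cannot be "re-expressed in terms of a divisor $d\mid(m-n)$ directly," and there is no $l$ left for the telescoping identity you invoke for $\Delta_0''$.

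The final cancellation mechanism you propose is also wrong as stated: for fixed $d\mid(m-n)$ the formal Euler product for $\sum_{(c,mn)=1}\mu(c)\varphi(d)/\varphi(cd)$ is $\prod_{p\mid d,\,p\nmid mn}(1-\tfrac1p)\prod_{p\nmid mnd}(1-\tfrac1{p-1})$, which vanishes only through the prime $2$ and does not vanish at all when $mn$ is even; so "the standard Eulerian identity" cannot be the reason for \eqref{410}, and the hedge "once one has suitably interpreted the (formal) $k,u$-factor" is exactly where the missing content lies. The actual proof is a finite, term-by-term matching in the ranges $b=ac\le C$, $k\le K$: Möbius summation over $g$ forces $gh=1$ (so $\mathfrak m=m$, $\mathfrak n=n$), the squarefree $u$-sum factors so that the part supported on $p\mid c$, $p\nmid k$ contributes $\prod_{p\mid c,\,p\nmid k}(1+\tfrac1{p(p-1)})$, and the identity $\sum_{ac=b}\frac{\mu(a)}{a\varphi([a,k])}\prod_{p\mid c,\,p\nmid k}\bigl(1+\frac{1}{p(p-1)}\bigr)=\frac{\varphi(b)}{\varphi(bk)}$ shows that for each fixed $b$ and $k$ the weight in $\Delta_0'$ equals exactly the weight in $-\Delta_0''$; no summation of $c$ over all integers, and no conditional convergence, is involved. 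Your proposal would need to be replaced by (or reduced to) this direct coefficient matching to close the gap.
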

\begin{proof}
We have
$$\Delta_{0}''(m,n) = -\sum_{\substack{c \leq C\\(c,mn)=1}}\sum_{k \leq K}\frac{\mu(c)}{\varphi(ck)}\sumflat_{(u,ck)=1}\mu((u,mn))
\frac{\varphi((u,mn))}{u\varphi(u)}\sumstar_{\chi \,(\!\bmod{k})}\chi(m)\chibar(n).$$
On the other hand
\begin{equation*}\begin{split}
\Delta_{0}'(m,n) = &\sum\sum_{\substack{(ac,mn)=1\\ac \leq C}}\frac{\mu(a)\mu(ac)}{a\varphi(ac)}\sum\sum_{gh \mid (m,n)}\mu(g)
\sum_{k\leq K}\frac{1}{\varphi([a,k])}\\
&\sumflat_{(u,ak)=1}\mu((u,\mathfrak{m}\mathfrak{n}))\frac{\varphi((u,\mathfrak{m}\mathfrak{n}))}{u\varphi(u)}
\sumstar_{\chi \,(\!\bmod{k})}\chi(\mathfrak{m})\chibar(\mathfrak{n}).
\end{split}\end{equation*}
By M\"{o}bius formula $gh=1$, so $\mathfrak{m} = m$, $\mathfrak{n} = n$.  Put $ac=b$, so $(b,mn)=1$, $b \leq C$.  The sum over $u$ with
$(u,ak)=1$ is equal to the same sum over $u$ with $(u,bk)=1$ times the same sum over $u \mid c$ with $(u,k)=1$, which is
$$\sum_{\substack{u \mid c\\(u,k)=1}}\mu((u,mn))\frac{\varphi((u,mn))}{u\varphi(u)} = \prod_{\substack{p\mid c\\p \nmid k}}\Big(1 + \frac{1}{p(p-1)}\Big).$$
Then we have
\begin{eqnarray*}
\sum_{ac =b}\frac{\mu(a)}{a\varphi([a,k])}\prod_{\substack{p\mid c\\p \nmid k}}\Big(1 + \frac{1}{p(p-1)}\Big) & = &
\frac{1}{\varphi(k)}\sum_{ac =b}\frac{\mu(a)}{a\varphi(a/(a,k))}\prod_{\substack{p\mid c\\p \nmid k}}\Big(1 + \frac{1}{p(p-1)}\Big)\\
& = & \frac{1}{\varphi(k)}\prod_{\substack{p\mid b\\p \nmid k}}\Big(1 - \frac{1}{p}\Big) = \frac{\varphi(b)}{\varphi(bk)}.
\end{eqnarray*}
This shows that the sums $\Delta_{0}'(m,n)$, $\Delta_{0}''(m,n)$  agree except for the sign (to see it clearly re-name $c$ to $b$ in
$\Delta_{0}''(m,n)$).  This completes the proof of Lemma \ref{lemma45}.
\end{proof}

Let us write explicitly the remaining parts of the decompositions \eqref{49}.  These come from the second part of the Euler-MacLaurin formula
\eqref{46}.  We have
\begin{equation}\label{411}\begin{split}
\Delta_{2}(m,n) = -\frac{1}{Q}&\sum_{\substack{c \leq C\\(c,mn)=1}}\mu(c)\sum_{k \leq K}\frac{ck}{\varphi(ck)}
\\
&
\sumflat_{(u,ck)=1}\mu((u,mn))
\frac{\varphi((u,mn))}{\varphi(u)}\Psi_{2}\Big(\frac{Q}{cku}\Big)\sumstar_{\chi \,(\!\bmod{k})}\chi(m)\chibar(n),
\end{split}\end{equation}
\begin{equation}\label{412}\begin{split}
\Delta_{1}(m,n) = &\sum\sum_{\substack{(ac,mn)=1\\ac \leq C}}\frac{\mu(a)\mu(ac)}{a\varphi(ac)}\sum\sum_{gh \mid (m,n)}\mu(g)
\sum_{k \leq K}\frac{1}{\varphi([a,k])}\\
&\sumflat_{(u,ak)=1}\mu((u,\mathfrak{m}\mathfrak{n}))\frac{\varphi((u,\mathfrak{m}\mathfrak{n}))}{u\varphi(u)}
\Psi_{1}\Big(\frac{|m-n|(a,k)c}{kuhQ}\Big)\sumstar_{\chi \,(\!\bmod{k})}\chi(\mathfrak{m})\chibar(\mathfrak{n}).
\end{split}\end{equation}
In the next two sections we shall make further transformations of \eqref{411} and \eqref{412} by applying \eqref{42}.

We shall show that $\Delta_{2}(m,n)$ and $\Delta_{1}(m,n)$ yield small contributions, not for every $m,n$, but due to cancellation in the bilinear
forms
\begin{equation}\label{413}
\SS_{2}(\AA\times\BB) = \sum_{m}\sum_{n}a_{m}b_{n}F(m,n)\Delta_{2}(m,n)
\end{equation}
\begin{equation}\label{414}
\SS_{1}(\AA\times\BB) = \sum_{m}\sum_{n \neq m}a_{m}b_{n}F(m,n)\Delta_{1}(m,n).
\end{equation}
Estimation of $\SS_{2}(\AA\times\BB)$ is given in Section \ref{section5} by a straightforward application of the large sieve inequality.  The
bilinear form $\SS_{1}(\AA\times\BB)$ will be treated by more subtle arguments in Sections \ref{section6}, \ref{section7}, \ref{section8}.

We end this section by a further partition of $\Delta_{1}(m,n)$.  To simplify this partition we assume that all characters are non-singular, except
possibly the trivial one $\chi = 1$.  If the trivial character is singular then it takes a special place in \eqref{412}, so we pull out its contribution,
say $\Delta^{+}(m,n)$.  Therefore
\begin{equation}\label{415}
\Delta_{1}(m,n) = \Delta^{+}(m,n) + \Delta^{*}(m,n)
\end{equation}
where $\Delta^{*}(m,n)$ is the sum \eqref{412} with the terms for $k=1$ being omitted, but only if $\chi = 1$ is singular.  Hence
$\Delta^{*}(m,n) = \Delta_{1}(m,n)$ and $\Delta^{+}(m,n) = 0$, unless $\chi = 1$ is singular, in which case
\begin{equation}\label{416}\begin{split}
\Delta^{+}(m,n) = &\sum\sum_{\substack{(ac,mn)=1\\ac \leq C}}\frac{\mu(a)\mu(ac)}{a\varphi(a)\varphi(ac)}\sum\sum_{gh \mid (m,n)}\mu(g)\\
&\sumflat_{(u,a)=1}\mu((u,\mathfrak{m}\mathfrak{n}))\frac{\varphi((u,\mathfrak{m}\mathfrak{n}))}{u\varphi(u)}
\Psi_{1}\Big(\frac{|m-n|ac}{uhQ}\Big).
\end{split}\end{equation}
Accordingly we split the bilinear form
\begin{equation}\label{417}
\SS_{1}(\AA\times\BB) = \SS^{+}(\AA\times\BB) + \SS^{*}(\AA\times\BB).
\end{equation}

If the trivial character is singular then $\Delta^{+}(m,n)$ potentially could yield an extra main term other than the one previously extracted
from the diagonal terms $\Delta(m,m)$.  We shall avoid this possibility due to the extra condition \eqref{224} which kills the problem before
computations become unbearable.

\section{Estimation of $\SS_{2}(\AA\times\BB)$}\label{section5}

First we apply \eqref{42} to \eqref{411} getting
\begin{equation*}\begin{split}
\Delta_{2}(m,n) = &-\frac{1}{Q}\sum_{\substack{c \leq C\\(c,mn)=1}}\mu(c)\sum_{k \leq K}
\sum\sum\sum_{\substack{\alpha\beta\mid m\\\alpha\gamma\mid n}}\mu(\alpha)\mu(\alpha\beta\gamma)\frac{\alpha\beta\gamma ck}{\varphi(\alpha\beta\gamma ck)}\\
&\sumflat_{(u,\alpha\beta\gamma ck)=1}\varphi(u)^{-1}\Psi_{2}\Big(\frac{Q}{\alpha\beta\gamma cku}\Big)\sumstar_{\chi \,(\!\bmod{k})}\chi(m)\chibar(n).
\end{split}\end{equation*}
Hence
\begin{equation*}\begin{split}
\SS_{2}(\AA\times\BB) = &-\frac{1}{Q}\sum_{\substack{c \leq C\\(c,mn)=1}}\mu(c)\sum_{k \leq K}
\sum_{\alpha}\sum_{\beta}\sum_{\gamma}\mu(\alpha)\mu(\alpha\beta\gamma)\frac{\alpha\beta\gamma ck}{\varphi(\alpha\beta\gamma ck)}\\
&\sumflat_{(u,\alpha\beta\gamma ck)=1}\varphi(u)^{-1}\Psi_{2}\Big(\frac{Q}{\alpha\beta\gamma cku}\Big)\sumstar_{\chi \,(\!\bmod{k})}
\sum\sum_{\substack{m \equiv 0 (\alpha\beta)\\n \equiv 0 (\alpha\gamma)}}a_{m}b_{n}\chi(m)\chibar(n)F(m,n).
\end{split}\end{equation*}
By \eqref{47} we have $\Psi_{2}(Q/\alpha\beta\gamma cku) \ll \min(1/Q\alpha\beta\gamma cku)$.  Moreover we have
$$\sum_{u}\varphi(u)^{-1}\min(1,Q/\alpha\beta\gamma ku) \ll \min(1,Q/\alpha\beta\gamma k)\log{Q}.$$
Hence we obtain
\begin{equation*}\begin{split}
\SS_{2}(\AA\times\BB) \ll &(\log{Q})^{2}\frac{C}{Q}\sum_{k \leq K}
\sum_{\alpha}\sum_{\beta}\sum_{\gamma}\min(1,Q/\alpha\beta\gamma k)\\&\sumstar_{\chi \,(\!\bmod{k})}
\Big|\sum_{m \equiv 0 (\alpha\beta)}\sum_{n \equiv 0 (\alpha\gamma)}a_{m}b_{n}\chi(m)\chibar(n)F(m,n)\Big|.
\end{split}\end{equation*}
Recall \eqref{24} and \eqref{32}.  Then by the large sieve inequality we derive (see \eqref{17} and \eqref{19})
\begin{equation*}\begin{split}
\SS_{2}(\AA\times\BB) \ll (\log{Q})^{2A+2}K^{-1}\sum\sum\sum_{\alpha\beta\gamma \leq N^{2}}&\min\Big(1,\frac{Q}{\alpha\beta\gamma K_{0}}\Big)
\Big(K_{0}^{2}+\frac{N}{\alpha\beta}\Big)^{1/2}\\&\Big(K_{0}^{2}+\frac{N}{\alpha\gamma}\Big)^{1/2}\Big(\alpha\beta\gamma\Big)^{-1/2}
\end{split}\end{equation*}
with some $1 \leq K_{0} \leq K$.  Here the summation term is bounded by
$$\min\Big(K,\frac{Q}{\alpha\beta\gamma}\Big)\frac{K + 2\sqrt{N}}{\sqrt{\alpha\beta\gamma}}+\frac{N}{\alpha\beta\gamma} \le
[\sqrt{KQ}(K + 2\sqrt{N}) + N]/\alpha\beta\gamma.$$
Hence
\begin{eqnarray*}
\SS_{2}(\AA\times\BB) & \ll & [(KQ)^{1/2} + (QNK^{-1})^{1/2} + NK^{-1}](\log{Q})^{2A +5}\\
& \ll & [QC^{-1/2} + (CN)^{1/2} + CNQ^{-1}](\log{Q})^{2A +5}.
\end{eqnarray*}
This proves the following
\begin{proposition}\label{proposition51}
Let $C = Q^{\eps}$ and $N \leq Q^{2 - 2\eps}$.  Then for any complex numbers $a_{m},b_{n}$ satisfying \eqref{24} for $1 \leq m,n \leq N$ we have
\begin{equation}\label{51}
\SS_{2}(\AA\times\BB)  \ll Q^{1 - \eps/3}.
\end{equation}
\end{proposition}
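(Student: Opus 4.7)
The plan is to start from the expression \eqref{411} for $\Delta_2(m,n)$ and open up the arithmetic factor $\mu((u,mn))\varphi((u,mn))$ using Lemma \ref{lemma42}. This replaces the gcd condition by three free divisibility constraints $\alpha\beta \mid m$, $\alpha\gamma \mid n$, $\alpha\beta\gamma \mid u$, with a tidy weight $\alpha\beta\gamma \mu(\alpha)\mu(\alpha\beta\gamma)$. After substituting into \eqref{413} and interchanging the order of summation so that the $m,n$ sum is innermost, we are faced with a character sum
$$\sum_{m \equiv 0\,(\alpha\beta)}\sum_{n \equiv 0\,(\alpha\gamma)}a_m b_n \chi(m)\chibar(n) F(m,n)$$
for primitive $\chi \pmod k$, averaged over all other parameters.

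The next step is to dispose of the $u$-sum. Since $\Psi_2(Q/\alpha\beta\gamma cku) \ll \min(1, Q/\alpha\beta\gamma cku)$ by \eqref{47}, and $\sum_u \varphi(u)^{-1}\min(1,Q/Xu) \ll \min(1,Q/X)\log Q$, the whole $c,u$-summation contributes a manageable arithmetic weight depending only on $\alpha\beta\gamma k$, times a power of $\log Q$. I would also pin down a dyadic value $K_0 \in [1,K]$ for $k$ so the outer factor $\min(1,Q/\alpha\beta\gamma K_0)$ becomes a clean scalar in front of a single primitive-character average.

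At this point I would invoke the standard bilinear-form consequence \eqref{17}--\eqref{19} of the large sieve, applied to the coefficients $a_m \mathbf{1}_{\alpha\beta\mid m}$ and $b_n \mathbf{1}_{\alpha\gamma \mid n}$, with the Mellin-separation cost absorbed into $\mathcal{L}(M,N) \ll (\log N)^2$. The growth condition \eqref{24} turns $\sum |a_m|^2$ and $\sum |b_n|^2$ into $(\log Q)^{O(A)}$, and the divisibility conditions shrink the effective ranges to $N/\alpha\beta$ and $N/\alpha\gamma$. The result is a bound
$$\SS_2(\AA\times\BB) \ll (\log Q)^{O(A)} K^{-1} \sum_{\alpha,\beta,\gamma} \min\!\Big(1,\tfrac{Q}{\alpha\beta\gamma K_0}\Big)\bigl(K_0^2+\tfrac{N}{\alpha\beta}\bigr)^{1/2}\bigl(K_0^2+\tfrac{N}{\alpha\gamma}\bigr)^{1/2}(\alpha\beta\gamma)^{-1/2}.$$

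Finally I would bound the summand by $\bigl[\sqrt{KQ}(K+2\sqrt{N})+N\bigr]/\alpha\beta\gamma$ (estimating $\sqrt{xy}\le\frac12(x+y)$ inside the parenthesis, and using $\min(K,Q/(\alpha\beta\gamma))\le\sqrt{KQ}/\sqrt{\alpha\beta\gamma}$ where convenient, with the $N$ term surviving when $\alpha\beta\gamma$ is small). Summing $1/\alpha\beta\gamma$ contributes only a logarithmic factor since $\alpha\beta\gamma \le N^2$. Recalling $K \asymp QC^{-1}$ from \eqref{32}, this yields $\SS_2(\AA\times\BB)\ll\bigl[QC^{-1/2}+(CN)^{1/2}+CNQ^{-1}\bigr](\log Q)^{O(A)}$, and plugging in $C=Q^\eps$ with $N\le Q^{2-2\eps}$ gives $Q^{1-\eps/3}$. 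The only real obstacle is getting the three-variable arithmetic summation after the large sieve estimate into a form that is explicitly $o(Q)$; everything else is bookkeeping.
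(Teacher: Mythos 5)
Your proposal follows the paper's own proof essentially step for step: opening $\mu((u,mn))\varphi((u,mn))$ via Lemma \ref{lemma42}, summing out $u$ against $\min(1,Q/\alpha\beta\gamma cku)$, applying the bilinear large sieve bound \eqref{17}--\eqref{19} with a fixed $K_{0}\leq K$, and then the same summand estimate $[\sqrt{KQ}(K+2\sqrt{N})+N]/\alpha\beta\gamma$ leading to $[QC^{-1/2}+(CN)^{1/2}+CNQ^{-1}](\log Q)^{O(A)}$. This is correct and matches the argument in Section \ref{section5}.
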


\section{Estimation of $\SS^{*}(\AA\times\BB)$}\label{section6}

We start from \eqref{412} with $k=1$ being omitted if $\chi = 1$ is singular.  Applying \eqref{42} this becomes
\begin{equation}\label{60}\begin{split}
\Delta^{*}(m,n) = &\sum\sum_{\substack{(ac,mn)=1\\ac \leq C}}\frac{\mu(a)\mu(ac)}{a\varphi(ac)}\sum\sum_{gh \mid (m,n)}\mu(g)
\sum_{k \leq K}\frac{1}{\varphi([a,k])}\\
&\sum\sum\sum_{\substack{\alpha\beta \mid \mathfrak{m}\\\alpha\gamma\mid \mathfrak{n}}}\mu(\alpha)\frac{\mu(\alpha\beta\gamma)}{\varphi(\alpha\beta\gamma)}
\sumflat_{(u,\alpha\beta\gamma ak)=1}\frac{1}{u\varphi(u)}
\Psi_{1}\Big(\frac{|m-n|(a,k)c}{\alpha\beta\gamma kuhQ}\Big)\sumsharp_{\chi \,(\!\bmod{k})}\chi(\mathfrak{m})\chibar(\mathfrak{n}).
\end{split}\end{equation}
Here the superscript $\sharp$ in the summation over $\chi$ indicates that the singular character is removed.  Recall the notation \eqref{38}.
Hence
\begin{equation}\label{61}\begin{split}
\SS^{*}(\AA\times\BB) = &\sum\sum_{ac \leq C}\frac{|\mu(ac)|}{a\varphi(ac)}\sum_{g}\sum_{h}
\sum_{k \leq K}\frac{1}{\varphi(k)}\sum_{\alpha}\sum_{\beta}\sum_{\gamma}
\frac{|\mu(\alpha\beta\gamma)|}{\varphi(\alpha\beta\gamma)}\sum_{u}\frac{1}{u\varphi(u)}\\
&\sumsharp_{\chi \,(\!\bmod{k})}\Big|\sum\sum_{\substack{(mn,ac)=1\\m\equiv 0 (gh\alpha\beta)\\n\equiv 0 (gh\alpha\gamma)}}
a_{m}b_{n}F(m,n)\Psi_{1}\Big(\frac{|m-n|}{H}\Big)\chi(\mathfrak{m})\chibar(\mathfrak{n})\Big|,
\end{split}\end{equation}
where $H = \alpha\beta\gamma kuhQ/(a,k)c$.  Separate $|m-n|$ and $H$ by changing the variable $t$ in \eqref{48} to $H/y$ and use the
estimate $H\{y/H\} \leq y$.
Then we put $ac = b$ getting
\begin{equation}\begin{split}\label{62}
\SS^{*}(\AA\times\BB) = &\sumflat_{b \leq C}\frac{b}{\varphi(b)^{2}}\sum_{g}\sum_{h}\sum_{\alpha}\sum_{\beta}\sum_{\gamma}
\frac{|\mu(\alpha\beta\gamma)|}{\varphi(\alpha\beta\gamma)} \int y^{-1}\sum_{k \leq K}\frac{1}{\varphi(k)}\\&
\sumsharp_{\chi \,(\!\bmod{k})}\Big|\sum\sum_{\substack{(mn,b)=1\\m\equiv 0 (gh\alpha\beta)\\n\equiv 0 (gh\alpha\gamma)}}
a_{m}b_{n}F(m,n)\Omega\Big(\frac{|m-n|}{y}\Big)\chi(\mathfrak{m})\chibar(\mathfrak{n})\Big|.
\end{split}\end{equation}
Note and remember that $y$ runs over a segment $1 \ll y \ll N$.

Put $d_{1} = gh\alpha\beta, d_{2} = gh\alpha\gamma$, so $[d_{1},d_{2}] = gh\alpha\beta\gamma$.  The inner double sum in \eqref{62} is equal to
$\chi(\beta)\chibar(\gamma)$ times the sum
\begin{equation}\label{63}
\SS_{\chi}(d_{1},d_{2}) = \sum\sum_{\substack{(mn,b)=1\\m\equiv 0 (d_{1})\\n\equiv 0 (d_{2})}}
a_{m}b_{n}F(m,n)\Omega\Big(\frac{|m-n|}{y}\Big)\chi\Big(\frac{m}{d_{1}}\Big)\chibar\Big(\frac{n}{d_{2}}\Big).
\end{equation}
Keep in mind that $\SS_{\chi}(d_{1},d_{2})$ depends also on $b$ and $y$.  In the next section we are going to estimate $\SS_{\chi}(d_{1},d_{2})$
by using the Hybrid Large Sieve Inequality \eqref{16}.  Applying \eqref{71} to \eqref{62} we conclude the following estimates for
$\SS^{*}(\AA\times\BB)$.
\begin{proposition}\label{proposition61}
Suppose $\AA = (a_{m})$ is given by \eqref{25} with any complex numbers $\rho(r)$ for $1 \leq r \leq X$ satisfying \eqref{220}.
Moreover, suppose the $L$-function \eqref{26} has Euler product of degree $g\leq 3$.  Assume similar conditions for $\BB = (b_{n})$.  Then
\begin{equation}\label{64}
\SS^{*}(\AA\times\BB) \ll (QC^{-1} + X)(\log{Q})^{A}
\end{equation}
if $g \leq 2$, and \eqref{64} holds for $g=3$ but with $X$ replaced by $N^{1/3}X^{2/3}$.
\end{proposition}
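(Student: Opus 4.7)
The plan is to combine the bound \eqref{71} announced for Section \ref{section7} on the averaged inner character sum with a routine summation over the auxiliary parameters in \eqref{62}. The substantive content lies in \eqref{71} itself, so I would first describe how to prove that bound, then how its consequence for $\SS^{*}(\AA\times\BB)$ is extracted.

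To establish \eqref{71}, I would exploit the convolution structure $a_m = m^{-1/2}\sum_{lr=m}\lambda(l)\rho(r)$ (and analogously for $b_n$) inside \eqref{63}. Substituting, I obtain a fourfold sum over $(l_1,r_1,l_2,r_2)$ in which the mollifier variables $r_1,r_2 \leq X$ are short while the $l_i$-sums carry the $L$-function coefficients $\lambda$. Since by hypothesis no character is singular for $\mathcal{L}(s)$, the twisted Dirichlet series $\mathcal{L}(s,\chi\psi)$ — with $\psi$ absorbing the divisibility conditions $d_1\mid m,\ d_2\mid n$, the coprimality to $b$, and the characters $\chi(m/d_1)\chibar(n/d_2)$ — is entire. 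I would then apply an approximate functional equation to replace each long $l_i$-sum by two Dirichlet polynomials of length $\asymp k^{g/2}$, with the dual coefficients appearing on the second piece. After separating all variables by Mellin inversion (as in \eqref{18}), the hybrid large sieve inequality \eqref{16} delivers an averaged bound over $k\leq K$ and $\chi\,(\bmod k)$, with length parameters involving $X$ from the $r_i$ and $k^{g/2}$ from the shortened $l_i$.

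With \eqref{71} at hand, the remainder is bookkeeping in \eqref{62}. The weight $b/\varphi(b)^2$ summed over squarefree $b\leq C$ contributes $O(\log C)$; the variables $g,h,\alpha,\beta,\gamma$ parametrising $d_1=gh\alpha\beta$, $d_2=gh\alpha\gamma$ with $d_1,d_2\leq N$ contribute $(\log N)^{O(1)}$ via their $\mu^2/\varphi$ weights; and the integral over $y\in[1,N]$ coming from \eqref{48} contributes $\log N$ since the integrand is $O(y^{-1})$. Collecting these logarithmic losses and recalling $K\asymp Q/C$, the bound \eqref{64} emerges.

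The main obstacle lies in the case $g=3$ of \eqref{71}, where the dual length $\asymp k^{3/2}$ at $k\asymp K=Q/C$ is genuinely large and competes with both $N$ and $X$. The appearance of the hybrid term $N^{1/3}X^{2/3}$ (rather than a clean $X$) strongly suggests an interpolation between the direct hybrid large sieve estimate and the dualised one, obtained by splitting the $l$-range at an optimally chosen point and estimating each piece differently. The tighter restriction \eqref{223} that $X\leq Q^{1/2}$ in the cubic case, versus $X\leq Q^{1-\eps}$ for $g\leq 2$, is consistent with this interpolation being essentially sharp, and executing it cleanly while keeping track of all the arithmetic factors will be the delicate step.
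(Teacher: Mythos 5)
Your proposal matches the paper's argument: Proposition \ref{proposition61} is obtained exactly by applying Lemma \ref{lemma71} to \eqref{62} and summing the harmless weights over $b\leq C$, $g,h,\alpha,\beta,\gamma$ and the $y$-integral, while \eqref{71} itself is proved, as you outline, by exploiting the convolution \eqref{25}, writing $\SS_{\chi}(d_1,d_2)$ as a double Mellin integral against $A_{d_1}(s,\chi)=L(s,\chi)\times(\text{mollifier polynomial})$, and invoking the approximate functional equation together with the hybrid large sieve \eqref{16}. The only cosmetic difference is in the $g=3$ case: rather than splitting the $l$-range, the paper derives two complete bounds --- \eqref{710} via the functional equation and \eqref{711} by treating $A_d$ directly as a Dirichlet polynomial of length $N$ --- and uses $\min(N,X(K_0T)^{3/2})\leq N^{1/3}X^{2/3}K_0T$, which is the same interpolation you anticipate.
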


Note that if $g \leq 2$ then the bound \eqref{61} with $C = Q^{\eps}$ and $X = Q^{1-\eps}$ yields
\begin{equation}\label{65}
\SS^{*}(\AA\times\BB) \ll Q^{1-\eps/2}.
\end{equation}

Next consider an $L$-function of degree three mollified by a Dirichlet polynomial of length $X$.  The twisted $L$-function
$\mathcal{L}(s,\chi)$ with $\chi \pmod{q}$, $q \asymp Q$, can be approximated by two Dirichlet polynomials of length $\asymp Q^{3/2}$.
Therefore our sequences $\AA,\BB$ have length $\ll N = XQ^{3/2}$.  Then the bound \eqref{64} with $X$ replaced by $N^{1/3}X^{2/3} = XQ^{1/2}$
yields \eqref{65} provided
\begin{equation}\label{66}
X \ll Q^{1/2-\eps}.
\end{equation}

\section{Estimation of $\SS_{\chi}(d_{1},d_{2})$}\label{section7}

In this section we assume that $\AA = (a_{m})$ is given by \eqref{25} with any complex numbers $\rho(r)$ for $1 \leq r \leq X$ satisfying \eqref{220}.
Moreover, we assume that the $L$-function \eqref{26} has Euler product of degree $g \leq 3$ and no character is singular for $\mathcal{L}(s)$ except
possibly the trivial character $\chi =1$.  We also assume similar conditions for $\BB = (b_{n})$.
\begin{lemma}\label{lemma71}
For any $d_{1},d_{2} \geq 1$, $K \geq 1$ and $g=1,2$, we have
\begin{equation}\label{71}
\sum_{k \leq K}\frac{1}{\varphi(k)}\sumsharp_{\chi \,(\!\bmod{k})}|\SS_{\chi}(d_{1},d_{2})| \ll
\frac{\tau(d_{1}d_{2})^{2g}}{\sqrt{d_{1}d_{2}}}(K+X)(\log{Q})^{A}.
\end{equation}
This estimate also holds if $g=3$, but with $X$ replaced by $X^{1/3}N^{2/3}$.  Here the superscript $\sharp$ indicates that the singular character
is removed.
\end{lemma}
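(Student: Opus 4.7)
The plan is to expand $a_m$ and $b_n$ using the convolution structure \eqref{25}, separate all variables by Mellin inversion, and apply the Hybrid Large Sieve Inequality \eqref{16} to the resulting $\chi$-twisted Dirichlet polynomials. I first substitute $m = d_1 m'$, $n = d_2 n'$ in $\SS_\chi(d_1, d_2)$ and reinterpret $\sum_{lr = d_1 m'} \lambda(l)\rho(r)$ by splitting each divisor pair $(l,r)$ into its $d_1$- and $m'$-parts; using the multiplicativity of $\lambda$ together with the freeness of the $\rho$-variable $r$, this reduces to a bounded outer sum over factorizations $d_1 = e_1 f_1$, $d_2 = e_2 f_2$ which contributes the declared factor $\tau(d_1 d_2)^{2g}/\sqrt{d_1 d_2}$ (via $|\lambda|,|\rho| \leq \tau_g$ and the normalization $(d_1 d_2 m' n')^{-1/2}$), times an inner four-fold sum
$$\mathcal{T}_\chi = \sum\sum\sum\sum \lambda(L_1)\rho(R_1)\lambda(L_2)\rho(R_2)\chi(L_1 R_1)\chibar(L_2 R_2) G(L_1 R_1, L_2 R_2),$$
with a smooth weight $G$ encoding $F$ and $\Omega$.

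Next I would separate the four variables $L_1, R_1, L_2, R_2$ in $G$. The piece $F(d_1 L_1 R_1, d_2 L_2 R_2)$ splits at once by Mellin inversion, since \eqref{23} gives bounded Mellin transforms on the imaginary axes. The delicate ingredient is $\Omega(|d_1 L_1 R_1 - d_2 L_2 R_2|/y)$, which I would decouple either through its Fourier transform or by a Mellin--Barnes expansion of $|1 - d_2 L_2 R_2/(d_1 L_1 R_1)|^{-s}$. Rapid decay of these transforms truncates the auxiliary integrations at $|t_j| \ll (\log Q)^A$ with negligible tail. What remains is an absolutely convergent integral over imaginary parameters of a product
$$P^\lambda_\chi(t_1)\,P^\rho_\chi(t_2)\,\overline{P^\lambda_\chi(t_3)}\,\overline{P^\rho_\chi(t_4)},$$
where $P^\rho_\chi(t) = \sum_{R \leq X}\rho(R)\chi(R) R^{-1/2-it}$ has length $\leq X$ and $P^\lambda_\chi(t)$ is essentially a partial sum of $\mathcal{L}(1/2+it,\chi)$.

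To conclude, apply Cauchy--Schwarz in $k$ and $\chi$ to split $|\mathcal{T}_\chi|$ into pair norms $|P^\rho_\chi|^2$ and $|P^\lambda_\chi|^2$, and invoke \eqref{16} on each (the internal $t$-integrations fit inside the $T$-integration of \eqref{16} with $T \ll (\log Q)^A$). The $P^\rho$ factor contributes the $K+X$ in the claimed bound. For $P^\lambda$ the key input is that under the non-singular hypothesis $\mathcal{L}(s,\chi)$ is entire and admits an approximate functional equation truncating the partial sum effectively at length $\asymp k^{g/2} \leq K^{g/2}$: for $g \leq 2$ this length is $\leq K$ and is harmlessly absorbed into $K+X$; for $g = 3$ the AFE length $K^{3/2}$ exceeds $K$, and I would instead truncate by a H\"older/convexity trade-off at length $\asymp N^{2/3}$, producing the substitution $X \mapsto X^{1/3} N^{2/3}$ in the bound.

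The main obstacle is the separation of $\Omega(|m-n|/y)$ in the second step: the absolute value is non-multiplicative and singular at $m = n$, so the Fourier or Mellin--Barnes decomposition has to be engineered so that only polylogarithmically many auxiliary integrations appear, each with decay fast enough to preserve the factor $(\log Q)^A$ in the final estimate. A secondary delicacy is the $g = 3$ case: the AFE length $K^{3/2}$ is oversized relative to $K$, and the $N^{2/3}$ replacement is precisely what forces the stronger constraint \eqref{223} in Theorem~\ref{theorem23}.
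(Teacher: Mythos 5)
The decisive gap is your claim that decoupling $\Omega(|m-n|/y)$ "truncates the auxiliary integrations at $|t_j|\ll(\log Q)^{A}$ with negligible tail." That is false, and no amount of smoothness of $F$ or $\Omega$ can make it true: in the ratio variable $\log(m/n)$ the weight $\Omega(|m-n|/y)$ is a bump of width $\asymp y/v$ with $v$ as large as $N$, and $y$ runs over the whole range $1\ll y\ll N$ (see the remark after \eqref{62}), so its Fourier/Mellin transform decays only beyond height $\asymp N/y$, which can be a power of $Q$ (up to $Q^{2-\eps}$). The paper's estimate \eqref{76} quantifies exactly this: in the direction $s_1\approx -s_2$ the only decay is the factor $\bigl(1+(|s_1|+|s_2|)y/N\bigr)^{-1}$. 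Consequently the $t$-integration genuinely extends to heights $T$ up to $\asymp N$, and the whole content of the lemma is the $T$-dependence of the moment bound: after normalization one needs $\frac{1}{K_0T}\bigl[K_0^{2}T+X(K_0T)^{g/2}\bigr]\ll K_0+X$, which holds precisely because $g\le 2$, while for $g=3$ the paper must combine this with a second estimate \eqref{711} of shape $K_0^{2}T+N$ (treating $A_d$ as a raw Dirichlet polynomial of length $N$, no functional equation) and use $\min\bigl(X(K_0T)^{3/2},N\bigr)\le N^{1/3}X^{2/3}K_0T$. With a polylog $T$ none of this structure would be visible, and your own $g=3$ fallback ("H\"older/convexity trade-off at length $\asymp N^{2/3}$") is not an argument and does not produce the stated quantity (note the proof in fact yields $N^{1/3}X^{2/3}$, consistent with Proposition \ref{proposition61}; the $X^{1/3}N^{2/3}$ in the lemma statement is a misprint).

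Your Cauchy--Schwarz step is also structurally off. Splitting the four-fold product into plain second moments $|P^{\rho}_{\chi}|^{2}$ and $|P^{\lambda}_{\chi}|^{2}$ and invoking \eqref{16} on each is not available: Cauchy--Schwarz over $(k,\chi,t)$ either keeps each side's product $P^{\lambda}_{\chi}P^{\rho}_{\chi}$ together, in which case you face the mixed (mollified) second moment $\sum_{k\le K}\sumsharp_{\chi}\int\bigl|\sum_{r\le X}\rho(r)\chi(r)r^{-1/2-it}\bigr|^{2}\,|L(\tfrac12+it,\chi)|^{2}dt$ --- exactly what the paper proves as Lemma \ref{lemma73} via the approximate functional equation plus \eqref{16}, with combined length $X(KT)^{g/2}$ --- or, if you cross-pair $P^{\lambda}_{a}\overline{P^{\lambda}_{b}}$ against $P^{\rho}_{a}\overline{P^{\rho}_{b}}$, you get fourth-moment objects of effective length $(KT)^{g}$ and $X^{2}$, and already for $g=2$ the normalized bound is of size $K\sqrt{T}+X$, which is ruinous once $T$ is a power of $Q$ as above. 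So the $L$-factor and the length-$X$ polynomial must be kept inside one mean square; that, together with tracking the full range of $T$, is the substance of the paper's proof (the route through $A_{d}(s,\chi)=L(s,\chi)\cdot(\text{polynomial of length }X)\cdot P_{\delta}$, \eqref{77}--\eqref{79}, then \eqref{710} and, for $g=3$, \eqref{711}) that your plan skips. Your opening reduction extracting $\tau(d_1d_2)^{2g}/\sqrt{d_1d_2}$ is fine in spirit and matches the paper's Lemma \ref{lemma72} bookkeeping, and the use of entirety of $\mathcal{L}(s,\chi)$ for non-singular $\chi$ is the right hypothesis to move to the critical line, but as written the proposal does not prove the lemma.
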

\begin{proof}
We write
$$F(m,n)\Omega\Big(\frac{|m-n|}{y}\Big) = \oint\!\!\oint f(s_{1},s_{2})m^{-s_{1}}n^{-s_{2}}ds_{1}ds_{2}$$
with
$$f(s_{1},s_{2}) = \iint F(u,v)\Omega\Big(\frac{|u-v|}{y}\Big)u^{s_{1}-1}v^{s_{2}-1}dudv.$$
Change the variables $u = v(1+w)$ getting
$$f(s_{1},s_{2}) = \iint F(v+vw,v)\Omega\Big(\frac{v|w|}{y}\Big)(1+w)^{s_{1}-1}v^{s_{1}+s_{2}-1}dvdw.$$
Note that $|w| \asymp y/v$ and $N^{-1} \leq 1 + w \leq N$.
Let $0 \leq \sigma_{1},\sigma_{2} \leq 1$.  Estimating trivially we get
\begin{equation}\label{73}
f(s_{1},s_{2}) \ll N^{\sigma_{1}+\sigma_{2}}(\log{N})^{2}.
\end{equation}
We shall do better by partial integration.  Integrating $a \geq 1$ times ($a =1,2$ only because of the restriction in \eqref{23}) with respect
to $w$ we get
$$f(s_{1},s_{2}) = \frac{(-1)^{a}}{s_{1}\cdots(s_{1}+a-1)}\iint\Big(F(v(1+w),v)\Omega\Big(\frac{v|w|}{y}\Big)\Big)^{(a)}(1+w)^{s_{1}+a-1}
v^{s_{1}+s_{2}-1}dvdw$$
where
$$\Big(F(\cdot)\Omega(\cdot)\Big)^{(a)} = v^{a}\frac{\partial^{a}}{\partial u^{a}}F(\cdot)\Omega(\cdot)
+\cdots + \Big(\frac{v}{y}\Big)^{a}F(\cdot)\Omega^{(a)}(\cdot)\ll (1+w)^{-a}+|w|^{-a}.$$
Hence
\begin{eqnarray*}
f(s_{1},s_{2}) &\ll& |s_{1}|^{-a}\int_{1}^{N}\int[(1+w)^{\sigma_{1}-1} + (1+w)^{\sigma_{1}+a-1}|w|^{-a}]v^{\sigma_{1}+\sigma_{2}-1}dvdw\\
& \ll & |s_{1}|^{-a}\int_{1}^{N}\Big(1+\frac{y}{v}\Big)^{\sigma_{1}}[\log{N} + \Big(1+\frac{v}{y}\Big)^{a-1}]v^{\sigma_{1}+\sigma_{2}-1}dv\\
& \ll & |s_{1}|^{-a}(N+y)^{\sigma_{1}}[\log{N} + \Big(1+\frac{N}{y}\Big)^{a-1}]N^{\sigma_{2}}\log{N}\\
& \ll & |s_{1}|^{-1}\Big(\frac{N}{|s_{1}|y}\Big)^{a-1}N^{\sigma_{1}+\sigma_{2}}(\log{N})^{2}.
\end{eqnarray*}
By symmetry this bound holds with $|s_{1}|$ replaced by $|s_{2}|$.  Combining the result with the trivial bound \eqref{73} we get
\begin{equation}\label{74}
f(s_{1},s_{2}) \ll (1 + |s_{1}| + |s_{2}|)^{-1}\Big(1+(|s_{1}|+|s_{2}|)\frac{y}{N}\Big)^{1-a}N^{\sigma_{1}+\sigma_{2}}(\log{N})^{2}.
\end{equation}
Next, integrating $b \geq 1$ times ($b=1,2$ only because of the restriction in \eqref{23}) with respect to $v$ we get
\begin{equation*}\begin{split}
f(s_{1},s_{2}) = \frac{(-1)^{b}}{(s_{1}+s_{2})\cdots(s_{1}+s_{2}+b-1)}\iint&\Big(F(v(1+w),v)\Omega\Big(\frac{v|w|}{y}\Big)\Big)^{(b)}\\
&(1+w)^{s_{1}-1}v^{s_{1}+s_{2}+b-1}dvdw
\end{split}\end{equation*}
where
$$\Big(F(\cdot)\Omega(\cdot)\Big)^{(b)} \ll v^{-b} + \Big(\frac{|w|}{y}\Big)^{b} \ll v^{-b}.$$
Hence
\begin{eqnarray*}
f(s_{1},s_{2}) & \ll & |s_{1}+s_{2}|^{-b}\int_{1}^{N}\Big(1 + \frac{y}{v}\Big)^{\sigma_{1}}v^{\sigma_{1}+\sigma_{2}-1}dv\\
& \ll & |s_{1} + s_{2}|^{-b}N^{\sigma_{1}+\sigma_{2}}\log{N}.
\end{eqnarray*}
Combining this with the trivial bound \eqref{73} we get
\begin{equation}\label{75}
f(s_{1},s_{2}) \ll (1 + |s_{1}+s_{2}|)^{-b}N^{\sigma_{1}+\sigma_{2}}(\log{N})^{2}.
\end{equation}

Finally, combining \eqref{74} and \eqref{75} with $a=b=2$ we deduce the following estimate for the Mellin transform
\begin{equation}\label{76}
f(s_{1},s_{2}) \ll (1 + |s_{1}|+|s_{2}|)^{-1}(1+|s_{1}+s_{2}|)^{-1}\big(1+(|s_{1}|+|s_{2}|)\frac{y}{N}\big)^{-1}
N^{\sigma_{1}+\sigma_{2}}(\log{N})^{2}.
\end{equation}

By contour integration on vertical lines $\Re{s_{1}} = \sigma_{1} > 1/2$, $\Re{s_{2}} = \sigma_{2} > 1/2$,
\begin{equation}\label{77}
\SS_{\chi}(d_{1},d_{2}) = \oint\!\!\oint f(s_{1},s_{2})A_{d_{1}}(s_{1} + \frac{1}{2},\chi)B_{d_{2}}(s_{2}+\frac{1}{2},\chibar)ds_{1}ds_{2}
\end{equation}
where
\begin{equation}\label{78}
A_{d}(s,\chi) = \sum_{m \equiv 0 (d)}a_{m}\chi\Big(\frac{m}{d}\Big)m^{1/2-s}
\end{equation}
and $B_{d}(s,\chibar)$ is defined similarly.  We assume that both series have analytic continuation to the lines $\sigma_{1} = 0$, $\sigma_{2} =0$
without poles because $\chi$ is non-singular.  For $a_{m}$ given by \eqref{25} we get
\begin{eqnarray*}
A_{d}(s,\chi) & = & \sum_{rl \equiv 0 (d)}\rho(r)\lambda(l)\chi\Big(\frac{rl}{d}\Big)(rl)^{-s}\\
& = & d^{-s}\sum_{r}\rho(r)\chi\Big(\frac{r}{(d,r)}\Big)\Big(\frac{(d,r)}{r}\Big)^{s}
\sum_{l=1}^{\infty}\lambda\Big(\frac{dl}{(d,r)}\Big)\chi(l)l^{-s}.
\end{eqnarray*}
\begin{lemma}\label{lemma72}
For $\Re{s} \geq 0$ we have
$$\sum_{l=1}^{\infty}\lambda(\delta l)\chi(l)l^{-s} = L(s,\chi)P_{\delta}(s,\chi)$$
where
$$P_{\delta}(s,\chi) \ll \tau(\delta)^{2g}.$$
\end{lemma}
\begin{proof}
This is an easy exercise with the Euler product
$$L(s) = \sum_{l=1}^{\infty}\lambda(l)l^{-s} = \prod_{p}(1 - \alpha_{1}(p)p^{-s})^{-1}\cdots (1- \alpha_{g}(p)p^{-s})^{-1}.$$
\end{proof}
Hence
\begin{eqnarray*}
A_{d}(s,\chi) &=& L(s,\chi)d^{-s}\sum_{r}\rho(r)\chi\Big(\frac{r}{(d,r)}\Big)\Big(\frac{(d,r)}{r}\Big)^{s}P_{d/(d,r)}(s,\chi)\\
& = & L(s,\chi)d^{-s}\sum_{\delta \mid d}P_{\delta}(s,\chi)\sum_{(r,\delta)=1}\rho\Big(\frac{d}{\delta}r\Big)\chi(r)r^{-s}.
\end{eqnarray*}
\begin{lemma}\label{lemma73}
For any complex numbers $\rho(r)$, $1 \leq r \leq X$, we have
\begin{equation*}\begin{split}
\sum_{k \leq K}\,\,\sumsharp_{\chi \,(\!\bmod{k})}\int_{-T}^{T}&\Big|\sum_{r \leq X}\rho(r)\chi(r)r^{-1/2-it}\Big|^{2}|L\big(\frac{1}{2}+it,\chi\big)|^{2}dt\\
&\ll [K^{2}T+X(KT)^{g/2}]\Big(\sum_{r \leq X}\frac{|\rho(r)|^{2}}{r}\Big)(\log{KT})^{2g}
\end{split}\end{equation*}
where $K \geq 1$, $T \geq 1$ and the superscript $\sharp$ tells us that the singular character is omitted.
\end{lemma}
\begin{proof}
This follows by standard arguments using the approximate functional equation for $L(s,\chi)$ (cf. Theorem 5.3 of \cite{IK}) and the
HLSI as stated in \eqref{16}.
\end{proof}

By Lemma \ref{lemma73} it follows that
\begin{equation}\label{79}
\sum_{k \leq K}\,\,\sumsharp_{\chi \,(\!\bmod{k})}\int_{-T}^{T}|A_{d}\big(\frac{1}{2}+it,\chi\big)|^{2}dt
\ll d^{-1}\tau(d)^{2g}[K^{2}T+X(KT)^{g/2}](\log{KTX})^{A}.
\end{equation}

By \eqref{77}, \eqref{76} we find that the left side of \eqref{71} is bounded by
$$\frac{1}{K_{0}T}\sum_{k \leq K_{0}}\,\,\sumsharp_{\chi \,(\!\bmod{k})}\int_{-T}^{T}|A_{d_{1}}(\frac{1}{2}+it,\chi)||B_{d_{2}}(\frac{1}{2}+it,\chibar)|
dt(\log{Q})^{4}$$
for some $1 \leq K_{0} \leq K$ and $T \geq 1$.  Hence by Cauchy's inequality and \eqref{78} we get
\begin{equation}\label{710}
\frac{1}{K_{0}T}\frac{\tau(d_{1}d_{2})^{2g}}{\sqrt{d_{1}d_{2}}}[K_{0}^{2}T + X(K_{0}T)^{g/2}](\log{Q})^{A}.
\end{equation}
This proves \eqref{71} if $g \leq 2$.  for $g = 3$ we combine \eqref{710} with another estimate of similar type
\begin{equation}\label{711}
\frac{1}{K_{0}T}\frac{\tau(d_{1}d_{2})^{2g}}{\sqrt{d_{1}d_{2}}}[K_{0}^{2}T + N](\log{Q})^{A}.
\end{equation}
This estimate is derived in a similar way as was  \eqref{710} except that we keep the information $m,n \leq N$ from the support of $F(m,n)$.
Then \eqref{78} is a Dirichlet polynomial of length $N$ and without appealing to any $L$-function we get \eqref{711} straight by the HLSI.  Now
we can replace the second terms in \eqref{710}, \eqref{711} by their minimum
$$\min(N,X(K_{0}T)^{3/2}) \leq N^{1/3}X^{2/3}K_{0}T.$$
This proves \eqref{71} if $g=3$.
\end{proof} %This ends the \begin{proof} proceeding Lemma 7.1.

\section{Estimation of $\SS^{+}(\AA\times\BB)$}\label{section8}

Here $\Delta^{+}(m,n)$ is given by \eqref{416} and it goes through the same transformations as $\Delta^{*}(m,n)$, so $\Delta^{+}(m,n)$ is also
given by \eqref{60} with $k=1$.  Hence $\SS^{+}(\AA\times\BB)$ satisfies \eqref{61} with $k=1$ and $H = \alpha\beta\gamma uhQ/c$.  To get
\eqref{62} from \eqref{61} we used the estimate $H\{y/H\} \leq y$, but now we need a slightly better estimate
$$H\{y/H\} \leq \min(y,H) \leq \min(y,\alpha\beta\gamma uhQ).$$
Moreover we have
$$\sum_{u}\frac{1}{u\varphi(u)}\min(y,\alpha\beta\gamma uhQ) \ll \min(y, \alpha\beta\gamma hQ)\log{Q}.$$
Hence \eqref{62} for $\SS^{+}(\AA\times\BB)$ becomes
\begin{equation*}\begin{split}
\SS^{+}(\AA\times\BB) \ll &(\log{Q})\sumflat_{b \leq C}\frac{b}{\varphi(b)^{2}}\sum_{g}\sum_{h}\sum_{\alpha}\sum_{\beta}\sum_{\gamma}
\frac{|\mu(\alpha\beta\gamma)|}{\varphi(\alpha\beta\gamma)}\\
& \int y^{-2}\min(y, \alpha\beta\gamma hQ)\Big|\sum\sum_{\substack{(mn,b)=1\\m\equiv 0 (gh\alpha\beta)\\n\equiv 0 (gh\alpha\gamma)}}
a_{m}b_{n}F(m,n)\Omega\Big(\frac{|m-n|}{y}\Big)\Big|dy.
\end{split}\end{equation*}
The co-primality conditions $(m,b)=1$ and $(n,b)=1$ can be resolved separately by M\"{o}bius formula.  The resulting divisors of $(m,b)$
and $(n,b)$ can be attached to $\beta$ and $\gamma$ respectively, except for their greatest common factor which can be attached to $\alpha$.
This way we arrive at the following estimate
\begin{equation*}\begin{split}
\SS^{+}(\AA\times\BB) \ll &(\log{Q})^{2}\sum_{g}\sum_{h}\sum_{\alpha}\sum_{\beta}\sum_{\gamma}
\frac{|\mu(\alpha\beta\gamma)|}{\varphi(\alpha\beta\gamma)}\\
& \int y^{-2}\min(y, \alpha\beta\gamma hQ)\Big|\sum\sum_{\substack{m\equiv 0 (gh\alpha\beta)\\n\equiv 0 (gh\alpha\gamma)}}
a_{m}b_{n}F(m,n)\Omega\Big(\frac{|m-n|}{y}\Big)\Big|dy.
\end{split}\end{equation*}
We make further simplifications by putting $gh\alpha = d$.  We get
\begin{equation}\begin{split}\label{81}
\SS^{+}(\AA\times\BB) \ll (\log{Q})^{3}&\sum_{d}\tau(d)\sum\sum_{(\beta,\gamma)=1}\frac{1}{\beta\gamma}\int y^{-2}\min(y, \beta\gamma dQ)\\
&\Big|\sum\sum_{\substack{m\equiv 0 (d\beta)\\n\equiv 0 (d\gamma)}}a_{m}b_{n}F(m,n)\Omega\Big(\frac{|m-n|}{y}\Big)\Big|dy.
\end{split}\end{equation}
Note and remember that $y$ runs over a segment $1 \ll y \ll N$.

Using the bound $\beta\gamma dQ$ for the minimum in \eqref{81} and estimating the double inner sum trivially, one could show that
$\SS^{+}(\AA\times\BB) \ll Q(\log{Q})^{A}$, while our goal is
$$\SS^{+}(\AA\times\BB) \ll Q(\log{Q})^{-C}$$
for any $C\geq 0$.  Therefore we need to save only a little bit from the trivial estimation.  This saving will come from cancellation in the
coefficients $a_{m},b_{n}$.  It is just for this purpose that we now assume these coefficients to be of the form \eqref{25} with
\begin{equation}\label{82}
\lambda(l) = 1
\end{equation}
and $\rho(r)$ satisfying \eqref{224}.  In order to be able to use these special features we first need to eliminate the contribution of large
divisors $d\beta$ and $d\gamma$, because they may take so much out of $m$ and $n$ respectively that there is nothing left in $l$ or $r$ to
play with.

We split the right side of \eqref{81} into two parts according to
\begin{equation}\label{83}
\beta\gamma dQ > yQ^{\eps}
\end{equation}
\begin{equation}\label{84}
\beta\gamma dQ \leq yQ^{\eps}
\end{equation}
and we estimate $\min(y, \beta\gamma dQ)$ by $y$ or $\beta\gamma dQ$ in the ranges \eqref{83} or \eqref{84}, respectively.  We get
\begin{equation}\label{85}
\SS^{+}(\AA\times\BB) \ll (S_{1} + S_{2})(\log{Q})^{3}
\end{equation}
say, where
$$S_{1} = \int\sum\sum\sum_{\beta\gamma dQ > yQ^{\eps}}\frac{\tau(d)}{\beta\gamma}
\Big|\sum\sum_{\substack{m\equiv 0 (d\beta)\\n\equiv 0 (d\gamma)}}a_{m}b_{n}F(m,n)\Omega\Big(\frac{|m-n|}{y}\Big)\Big|\frac{dy}{y},$$
$$S_{2} = Q\int\sum\sum\sum_{\substack{\beta\gamma dQ \leq yQ^{\eps}\\(\beta,\gamma)=1}}\tau(d)d\int
\Big|\sum\sum_{\substack{m\equiv 0 (d\beta)\\n\equiv 0 (d\gamma)}}a_{m}b_{n}F(m,n)\Omega\Big(\frac{|m-n|}{y}\Big)\Big|\frac{dy}{y^{2}}.$$

We estimate $S_{1}$ using only the crude bounds \eqref{24} for the coefficients $a_{m}$,$b_{n}$.  We get
\begin{eqnarray}\label{86}\nonumber
S_{1} & \ll & \sum_{\beta \leq N}\sum_{\gamma \leq N}\sum_{d \leq N}\frac{\tau(d)}{\beta\gamma}
\sum\sum_{\substack{m,n \leq N\\d\beta \mid m,\,d\gamma \mid n\\|m-n| < \beta\gamma dQ^{1-\eps}}}\frac{\tau(m)^{A}\tau(n)^{A}}
{\sqrt{mn}}\\
\nonumber & \ll & \sum_{\beta \leq N}\sum_{\gamma \leq N}\sum_{d \leq N}\frac{\tau(\beta\gamma d)^{A}}{(\beta\gamma)^{3/2}d}
\sum\sum_{\substack{m,n \leq N\\|\frac{m}{\gamma} - \frac{n}{\beta}| < Q^{1-\eps}}}\Big[\Big(\frac{\gamma}{\beta}\Big)^{1/2}
\frac{\tau(m)^{2A}}{m}+\Big(\frac{\beta}{\gamma}\Big)^{1/2}\frac{\tau(n)^{2A}}{n}\Big]\\
& \ll & \sum_{\beta \leq N}\sum_{\gamma \leq N}\sum_{d \leq N}\frac{\tau(\beta\gamma d)^{A}}{\beta\gamma d}Q^{1-\eps}(\log{Q})^{A}
\ll Q^{1-\eps}(\log{Q})^{A}.
\end{eqnarray}

Estimation of $S_{2}$ requires extra conditions for the coefficients $a_{m}$,$b_{n}$ and the test function $F(m,n)$.  We assume the these
coefficients are of the form \eqref{25} with $\lambda(l)=1$, that is with \eqref{26} being the zeta function $\zeta(s)$, and that $\rho(r)$
satisfy \eqref{224}.  Actually the $\rho(r)$ in $a_{m}$ and those in $b_{n}$ can be different, say they are $\rho_{A}(r)$, $\rho_{B}(r)$ respectively.
Assume they are supported in dyadic segments $X_{A} \leq r \leq 2X_{A}$, $X_{B} \leq r \leq 2X_{B}$ respectively, with
\begin{equation}\label{87}
1 \leq X_{A},X_{B} \leq X \leq Q^{1 - 2\delta}.
\end{equation}
So far we have been assuming that the support of $F(m,n)$ is in the box \eqref{22}.  Now we need to localize the variables a little more.  We assume,
in addition to the former properties, that the support of $F(m,n)$ implies
\begin{equation}\label{88}
Q^{-\delta} \leq \frac{m}{n} \leq Q^{\delta}, \qquad mn \leq X_{A}X_{B}Q^{2 - 2\delta}
\end{equation}
for some constant $\delta > 0$ (in our applications any $0 < \delta < 1/2$ will be sufficient).  Moreover we extend \eqref{23} to
\begin{equation}\label{88'}
x^{i}y^{j}F^{(ij)}(x,y) \ll 1
\end{equation}
for any $i,j \geq 0$, the implied constant depending on $i,j$.

First we are going to evaluate the inner double sum
\begin{equation}\label{89}
V_{d_{1}d_{2}}(y) = \sum\sum_{\substack{m \equiv 0 (d_{1})\\n \equiv 0 (d_{2})}}a_{m}b_{n}F(m,n)\Omega\Big(\frac{|m-n|}{y}\Big).
\end{equation}
By \eqref{25}, $m = lr \equiv 0 \pmod{d}$ means $l = d'l'$ with $d' = d/(d,r)$.  Therefore \eqref{89} becomes
$$V_{d_{1}d_{2}}(y) = \sum_{r_{1}}\sum_{r_{2}}\frac{\rho_{A}(r_{1})\rho_{B}(r_{2})}{\sqrt{[r_{1},d_{1}][r_{2},d_{2}]}}
\sum_{l_{1}}\sum_{l_{2}}\frac{F([r_{1},d_{1}]l_{1},[r_{2},d_{2}]l_{2})}{\sqrt{l_{1}l_{2}}}\Omega\Big(\frac{|\cdot|}{y}\Big).$$
Here $l_{1},l_{2}$ run over positive integers, free of any arithmetical constraints but with smooth weights given by $F$ and $\Omega$.
From the support of $\Omega(|\cdot|/y)$ we see that $|m-n| \asymp y$ which range translates to
\begin{equation}\label{810}
|[r_{1},d_{1}]l_{1} - [r_{2},d_{2}]l_{2}| \asymp y.
\end{equation}
On the ther hand \eqref{84} tells us that $y$ is not very small, precisely for $d_{1} = d\beta$, $d_{2} = d\gamma$ we get
$$y \geq [d_{1},d_{2}]Q^{1-\eps} \geq d_{1}Q^{1-\eps} \geq d_{1}r_{1}X^{-1}Q^{1-\eps} \geq [d_{1},r_{1}]Q^{2\delta - \eps}.$$
Similarly $y \geq [d_{2},r_{2}]Q^{2\delta - \eps}$.  Therefore the integers $l_{1},l_{2}$ have considerable room to run over intervals
of length at least $Q^{2\delta - \eps}$.
Moreover, by $|m-n| \asymp y$ combined with the first condition in \eqref{88} it follows that $m,n \gg yQ^{-\delta}$.  For
$m = [r_{1},d_{1}]l_{1}$ this gives
$$d_{1}l_{1}Q^{1-2\delta} \gg r_{1}d_{1}l_{1} \gg [r_{1},d_{1}]l_{1} \gg [d_{1},d_{2}]Q^{1-\delta-\eps} \geq d_{1}Q^{1-\delta-\eps}$$
by \eqref{87}, hence $l_{1} \gg Q^{\delta - \eps}$.  Similarly we show that $l_{2} \gg Q^{\delta - \eps}$.  Since $l_{1}$,$l_{2}$ are
weighted smoothly, it allows us to replace the summation by the corresponding integration with a small error term, smaller than any negative power
of $Q$.  Next $l_{1}$,$l_{2}$ being continuous variables we change them by factors $[r_{1},d_{1}]$, $[r_{2},d_{2}]$ respetively getting
$$V_{d_{1}d_{2}}(y) = \Big(\sum_{r}\frac{\rho_{A}(r)}{[r,d_{1}]}\Big)\Big(\sum_{r}\frac{\rho_{B}(r)}{[r,d_{2}]}\Big)
\iint \frac{F(u,v)}{\sqrt{uv}}\Omega\Big(\frac{|u-v|}{y}\Big)dudv +O(Q^{-1}).$$

We have not yet exploited the second condition in \eqref{88} which says $uv \leq X_{A}X_{B}Q^{2 - 2\delta}$.  On the other hand we have
$|u-v|\asymp y$ and $Q^{-\delta} \leq u/v \leq Q^{\delta}$ which imply $uv \gg y^{2}Q^{-\delta}$.  Hence
$$X_{A}X_{B} \gg y^{2}Q^{\delta-2} > [d_{1},d_{2}]^{2}Q^{\delta - 2\eps} \geq d_{1}d_{2}Q^{\delta-2\eps},$$
so either $X_{A} \geq d_{1}Q^{\delta/2 - \eps}$ or $X_{B} \geq d_{2}Q^{\delta/2 -\eps}$.  This shows there is more than $Q^{\delta/2 - \eps}$
room for summation over $r$ in one of the above sums, and due to the assumption \eqref{224} we gain a factor $(\log{Q})^{-C}$ relative to a
trivial estimate.
The double integral in $u,v$ is easily estimated by $O(y\log{N})$.  Therefore
\begin{eqnarray*}
V_{d_{1}d_{2}}(y) \ll (\log{Q})^{1-C}y\Big(\sum_{r \leq X}\frac{\tau(r)^{A}}{[r,d_{1}]}\Big)\Big(\sum_{r \leq X}\frac{\tau(r)^{A}}{[r,d_{2}]}\Big) \ll y\frac{\tau(d_{1}d_{2})^{A}}{d_{1}d_{2}}(\log{Q})^{-C}.
\end{eqnarray*}
Hence
\begin{equation}\label{811}
S_{2} \ll Q(\log{Q})^{-C}\sum_{d_{1}<N}\sum_{d_{2}<N}\frac{\tau(d_{1}d_{2})^{A}}{d_{1}d_{2}}(d_{1},d_{2}) \ll Q(\log{Q})^{-C}.
\end{equation}

Finally, adding \eqref{811} to \eqref{86} we conclude by \eqref{85}
\begin{proposition}\label{proposition81}
Suppose $\AA = (a_{m})$ is given by \eqref{25} with complex numbers $\rho(r)$ supported on $X_{A} \leq r \leq 2X_{A}$, satisfying \eqref{220}
and \eqref{224}.  Moreover suppose the $L$-function \eqref{26} is exactly the zeta function $\zeta(s)$.  Assume similar conditions for
$\BB = (b_{n})$.  Let $F(m,n)$ be a smooth function supported in the box \eqref{22} whose partial derivatives satisfy \eqref{88'}.  Finally assume
the support of $F(m,n)$ implies the restrictions \eqref{88} with some small constant $\delta >0$.  Then
\begin{equation}\label{812}
\SS^{+}(\AA\times\BB) \ll Q(\log{Q})^{-C}
\end{equation}
with any $C \geq 0$, the implied constant depending on $C$.
\end{proposition}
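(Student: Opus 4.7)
The plan is to mirror the transformations carried out for $\SS^{*}$ in Sections \ref{section6}--\ref{section7}, but specialized to the trivial character ($k=1$) throughout, and then exploit the fact that we no longer need the large sieve for characters since all cancellation must now come from the arithmetic structure of $\rho$. First I would use that $\Delta^{+}(m,n)$ is given by the same formula \eqref{60} but with $k=1$, so \eqref{61} and \eqref{62} apply with $k=1$ and $H = \alpha\beta\gamma uhQ/c$. Without a character sum to save we can replace the crude $H\{y/H\}\leq y$ by $H\{y/H\}\leq\min(y,H)$, sum over $u$ directly, and (after resolving $(mn,b)=1$ by M\"obius and absorbing the resulting divisors into $\alpha,\beta,\gamma$) arrive at an estimate of the form \eqref{81} with the weight $\min(y,\beta\gamma dQ)$.

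Next I would split this bound into two pieces according as $\beta\gamma dQ>yQ^{\eps}$ or $\beta\gamma dQ\leq yQ^{\eps}$, call them $S_{1}$ and $S_{2}$. For $S_{1}$, the first range forces $|m-n|<\beta\gamma dQ^{1-\eps}$, and the mere growth condition \eqref{24} together with the divisibility constraints $d\beta\mid m$, $d\gamma\mid n$ are enough to obtain $S_{1}\ll Q^{1-\eps}(\log Q)^{A}$ by a standard three-variable estimate; this part requires no use of \eqref{82} or \eqref{224}.

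The substantive work is in $S_{2}$. Writing $d_{1}=d\beta$, $d_{2}=d\gamma$ and substituting $a_{m}=m^{-1/2}\sum_{lr=m}\rho_{A}(r)$, $b_{n}=n^{-1/2}\sum_{lr=n}\rho_{B}(r)$ with $\lambda(l)=1$, the inner double sum becomes
\[
V_{d_{1}d_{2}}(y)=\sum_{r_{1}}\sum_{r_{2}}\frac{\rho_{A}(r_{1})\rho_{B}(r_{2})}{\sqrt{[r_{1},d_{1}][r_{2},d_{2}]}}\sum_{l_{1}}\sum_{l_{2}}\frac{F([r_{1},d_{1}]l_{1},[r_{2},d_{2}]l_{2})}{\sqrt{l_{1}l_{2}}}\Omega\Big(\frac{|\cdot|}{y}\Big).
\]
I would verify, using the second half of \eqref{88} and the range condition \eqref{84}, that $l_{1},l_{2}$ run smoothly over intervals of length $\gg Q^{\delta-\eps}$, so Poisson (or simply Euler--Maclaurin with the smoothness \eqref{88'}) replaces the $l_{i}$-sums by integrals up to an error negligible compared with any power of $Q^{-1}$. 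This factors $V_{d_{1}d_{2}}(y)$ into $\bigl(\sum_{r}\rho_{A}(r)/[r,d_{1}]\bigr)\bigl(\sum_{r}\rho_{B}(r)/[r,d_{2}]\bigr)$ times a smooth integral bounded by $O(y\log N)$.

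The hard part is now to turn the cancellation hypothesis \eqref{224} into a saving of $(\log Q)^{-C}$. The point is to show that at least one of $d_{1},d_{2}$ is much smaller than the corresponding mollifier length: combining $|m-n|\asymp y$ with $m/n\in[Q^{-\delta},Q^{\delta}]$ from \eqref{88} gives $mn\gg y^{2}Q^{-\delta}$, and $mn\leq X_{A}X_{B}Q^{2-2\delta}$ together with $y\geq [d_{1},d_{2}]Q^{1-\eps}$ forces $X_{A}X_{B}\gg d_{1}d_{2}Q^{\delta-2\eps}$, so either $X_{A}\geq d_{1}Q^{\delta/2-\eps}$ or $X_{B}\geq d_{2}Q^{\delta/2-\eps}$. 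In whichever of the two factors enjoys this ``room'', partial summation on $\sum_{r}\rho(dr)$ via \eqref{224} produces the desired factor $(\log Q)^{-C}$ over the trivial estimate. The remaining $r$-sum is bounded trivially by $\tau(d)^{A}/d$, and finally summing over $d_{1},d_{2}$ with the weight $(d_{1},d_{2})/d_{1}d_{2}$ converges to give $S_{2}\ll Q(\log Q)^{-C}$, which combined with the bound for $S_{1}$ in \eqref{85} yields \eqref{812}.
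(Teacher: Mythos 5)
Your proposal follows the paper's own argument essentially step for step: the same specialization of \eqref{60}--\eqref{62} to $k=1$ with the sharper bound $H\{y/H\}\leq\min(y,H)$, the same split into $S_{1}$ and $S_{2}$ at $\beta\gamma dQ \gtrless yQ^{\eps}$, the same evaluation of $V_{d_{1}d_{2}}(y)$ by replacing the smooth $l_{1},l_{2}$ sums by integrals, and the same use of \eqref{88} and \eqref{84} to show one of the mollifier sums has room $\geq Q^{\delta/2-\eps}$ so that \eqref{224} supplies the $(\log Q)^{-C}$ saving. This matches the paper's proof, so no further comparison is needed.
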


Adding \eqref{812} to \eqref{65} we get
\begin{equation}\label{813}
\SS_{1}(\AA\times\BB) \ll Q(\log{Q})^{-C}.
\end{equation}
Then adding \eqref{813} to \eqref{51} we complete (by Lemma \ref{lemma32} and Lemma \ref{lemma45}) the proof of Theorem \ref{theorem24}.

\section{Modifications of the test function}\label{section9}

The conditions for the test function $F(m,n)$ in the bilinear form $\SS(\AA\times\BB)$ are robust, but not quite flexible for some applications.
As it is (in basic Theorems \ref{theorem22}, \ref{theorem23}, \ref{theorem24}) $F(m,n)$ depends on the variables $m$,$n$ and so it does not capture
the factorization properties of our coefficients $a_{m}$,$b_{n}$ given by \eqref{25}.  To bring the results a bit closer to applications environment
one should consider test functions of type
\begin{equation}\label{91}
F(l_{1},r_{1};l_{2},r_{2};q)
\end{equation}
where $l_{1}r_{1} = m$, $l_{2}r_{2} = n$ are our original variables and $q$ runs over the moduli of our family of characters, so the restrictions
\eqref{22} imply $l_{1}r_{1} \leq N$, $l_{2}r_{2} \leq N$ and $q \asymp Q$, while the ones in \eqref{88} imply
\begin{equation}\label{92}
Q^{-\delta} < \frac{l_{1}r_{1}}{l_{2}r_{2}} < Q^{\delta}, \qquad l_{1}l_{2} \ll Q^{2 - 2\delta}.
\end{equation}
Note that we do not need precise restrictions for $l_{1}$ and $l_{2}$ separately, but only for the product $l_{1}l_{2}$.

Assuming that the function \eqref{91} is smooth and has sufficiently many partial derivatives relatively small, one can reduce this case (by separation of
variables techniques) to the one already considered with $F(m,n)$.  A little contamination in the separation process is tolerable by the quite
flexible conditions on our coefficients $a_{m}$,$b_{n}$, while the fact that the bilinear form $\SS(\AA\times\BB)$ is linear in $F$ makes the process
straightforward.

One does not even need to assume that \eqref{91} is compactly supported.  If some of the variables $l_{1},r_{1},l_{2},r_{2}$ exceed the range of our
previous results, then \eqref{91} is very small in size and a direct, crude application of the LSI in this excessive range gives a sufficiently
strong estimate to be neglected by comparison with the main term.  We leave out the details how exactly the desirable adjustments are executed
for experienced readers.

We are going to reformulate our basic theorems for the bilinear form $\SS(\AA\times\BB)$ with the test function of the special shape
\begin{equation}\label{93}
F(l_{1},r_{1};l_{2},r_{2};q) = G\Big(\frac{l_{1}r_{1}}{l_{2}r_{2}},\frac{l_{1}l_{2}}{q^{g}}\Big)
\end{equation}
where $G(x,y)$ is a nice smooth function, rapidly decaying to zero as $x \rightarrow 0$, or $x \rightarrow \infty$, or $y \rightarrow \infty$.
This means that the main activity happens in the range $l_{1}r_{1} \asymp l_{2}r_{2}$, $l_{1}l_{2} \ll Q^{g}$.  We can write
\begin{equation}\label{94}
G\Big(\frac{l_{1}r_{1}}{l_{2}r_{2}},\frac{l_{1}l_{2}}{r_{1}r_{2}q^{g}}\Big) = G\Big(\frac{m}{n},\frac{mn}{r_{1}r_{2}q^{g}}\Big)
\end{equation}
where $m = l_{1}r_{1}$, $n=l_{2}r_{2}$ are our original variables.  Hence it suffices to perform the separation arguments only with
respect to the single variable $y$.

Suppose $G(x,y)$ is smooth on $\R^{+}\times\R^{+}$ with partial derivatives satisfying
\begin{equation}\label{95}
x^{a}y^{b}G^{(a,b)}(x,y) \ll (1 + |\log{x}|)^{-c}(1 + y)^{-c}(\log{Q})^{aA}
\end{equation}
for some $A \geq 1$ and any $a,b,c \geq 0$, the implied constant depending on $a,b,c$.
\begin{theorem25}\label{theorem25}
Assuming the conditions of Theorems \ref{theorem22}, \ref{theorem23}, \ref{theorem24} adjusted to the context of the test function
$G(x,y)$ we have
\begin{equation}\label{96}
\SS(\AA\times\BB) = \SS_{diag}(\AA\times\BB) + O(Q(\log{Q})^{-C})
\end{equation}
for any $C \geq 0$.  Here the leading term is defined by \eqref{211} and it is also given by the adjusted form of \eqref{218} which becomes
\begin{equation}\begin{split}\label{97}
\SS_{diag}(\AA\times\BB) =& \mathfrak{S}Q\sum\sum\sum\sum_{r_{1}l_{1} = r_{2}l_{2}}\rho_{A}(r_{1})\rho_{B}(r_{2})\lambda(l_{1})\lambda(l_{2})\\
& \frac{1}{r_{1}l_{1}}\prod_{p \mid r_{1}l_{1}}\Big(1 - \frac{1}{p}\Big)\Big(1 - \frac{1}{p^{2}} - \frac{1}{p^{3}}\Big)^{-1}
\int\Psi(t)G(1,l_{1}l_{2}(tQ)^{-g})dt + O(Q^{1/2 + \eps}).
\end{split}\end{equation}
\end{theorem25}


\begin{thebibliography}{99}

\bibitem[CIS]{CIS}
{ J.B.\,Conrey, H.\,Iwaniec and K.\,Soundararajan},
\textit{Critical Zeros of Dirichlet L-functions}, preprint.

\bibitem[IK]{IK}
{ H.\,Iwaniec and E.\,Kowalski},
\textit{Analytic number theory},
AMS Colloquium Publications, vol 53, AMS, Providence, RI, 2004.

\bibitem[Lin]{Lin}
{Yu.V.\,Linnik} \textit{The large sieve}, Dokl. Akad. Nauk SSSR 30 (1941), 292 -- 294.


\end{thebibliography}
\end{document}